\documentclass{birkjour}

\usepackage{amscd,amssymb,amsmath}
\usepackage{bm}
\usepackage[mathscr]{euscript}
\usepackage{epsfig}
\usepackage{tikz}
\usetikzlibrary{calc}
\usepackage{tikz-cd}
\usepackage{hyperref}
\usepackage[capitalize,nameinlink]{cleveref}
\usepackage{palatino}
\usepackage{enumerate}

\theoremstyle{plain}
\newtheorem{theorem}{Theorem}[section]
\newtheorem{proposition}[theorem]{Proposition}
\newtheorem{lemma}[theorem]{Lemma}
\newtheorem{corollary}[theorem]{Corollary}

\theoremstyle{definition}
\newtheorem{definition}[theorem]{Definition}
\newtheorem{remark}[theorem]{Remark}

\newcommand{\CM}{{\mathbb C}}

\newcommand{\NM}{{\mathbb N}}

\newcommand{\ZM}{{\mathbb Z}}

\newcommand{\Aa}{{\mathcal A}}

\newcommand{\Bb}{{\mathcal B}}
\newcommand{\Dd}{{\mathcal D}}
\newcommand{\Ff}{{\mathcal F}}

\newcommand{\Cc}{{\mathcal C}}

\newcommand{\Ll}{{\mathcal L}}

\newcommand{\Kk}{{\mathcal K}}
\newcommand{\Hh}{{\mathcal H}}

\begin{document}

\title[On a $C^\ast$-Diagonal Generated by the Toric Code]{On a $C^\ast$-Diagonal Generated by the Toric Code}

\author[D. P. Ojito]{Danilo Polo Ojito}
\address{Department of Physics, Universidad de los Andes. Bogota, Colombia \\
	\href{mailto:d.poloo@uniandes.edu.co}{d.poloo@uniandes.edu.co}}

\author[E. Prodan]{Emil Prodan}

\address{Department of Physics and
 Department of Mathematical Sciences, Yeshiva University. New York, NY 10016, USA \\
 \href{mailto:prodan@yu.edu}{prodan@yu.edu}
}

\date{\today}

\subjclass{Primary: 46L55, 82B10; Secondary: 37A55, 22A22, 46L05.}

\keywords{Toric code model, Weyl groupoid, $C^*$-diagonals, CAR algebra}

\begin{abstract}
We study the abelian sub-$C^\ast$-algebra of the $2^\infty$-UHF algebra generated by the star and face operators of Kitaev’s toric code. We show that it is a $C^\ast$-diagonal equivalent to the canonical diagonal of $M_{2^\infty}$.
\end{abstract}

\maketitle

%{\scriptsize \tableofcontents}

\section{Introduction} 
A sub-$C^\ast$-algebra $\Bb$ of a unital $C^\ast$-algebra $\Aa$ is a maximal abelian sub-$C^\ast$-algebra (masa) for $\Aa$ if the relative commutant of $\Bb$ in $\Aa$ is $\Bb$ itself. An element $a \in \Aa$ is said to be a normalizer for the masa $\Bb$ if $a\Bb a^\ast \cup a^\ast \Bb a \subseteq \Bb$, and a masa is called regular if the set of normalizers generates the entire $\Aa$. If in addition $\Bb$ has the unique extension property, meaning that any pure state over $\Bb$ extends uniquely over $\Aa$ as a pure state, then $\Bb$ is a $C^\ast$-diagonal for $\Aa$ \cite{Kum}. The unique extension property automatically implies the existence of a unique faithful conditional expectation from $\Aa$ to $\Bb$ \cite{AndersonTAMS1979}.

$C^\ast$-diagonals and other $C^\ast$-inclusions play important roles in the $C^\ast$-algebra classification program \cite{SchafhauserArxiv2025}. Two $C^\ast$-diagonals for $\Aa$ are said to be (automorphism) equivalent if they are isomorphic as $C^\ast$-algebras and if one of their isomorphisms lifts to an automorphism of $\Aa$. Classification of $C^\ast$-diagonals under this equivalence supplies  valuable information about the structure of individual  $C^*$-algebras. See \cite{Kum,LiTAMS2019}, \cite[Sec.~13]{SchafhauserArxiv2025} and the introductory remarks in \cite{KopsacheilisArxiv}.

In this note, we shall study a  $C^\ast$-diagonal for the quantum spin algebra $M_{2^\infty}$. Despite its structural simplicity, $M_{2^\infty}$ contains many inequivalent $C^\ast$-diagonals and their classification is far from being complete (cf. \cite[Problem~XLVIII]{SchafhauserArxiv2025}). For example, Blackadar demonstrated in \cite{BlackadarAM1990} that $M_{2^\infty}$ includes a family of mutually inequivalent $C^\ast$-diagonals, the spectrum of each of which is the product of a circle and a Cantor set. Recently, a countably infinite family of mutually inequivalent $C^\ast$-diagonals for $M_{2^\infty}$, all with Cantor spectrum, was found by Kopsacheilis and Winter  \cite{KopsacheilisArxiv}. 

In the physics literature, one can find large classes of quantum spin models, dubbed gapped topological phases, with dynamics generated by Hamiltonians containing infinite sequences of commuting terms. These terms are  generated algorithmically by specialized actions of finite-dimensional Hopf algebras on planar spin networks \cite{KitaevAOP2003, BuerschaperJMP2013,LevinPRB2005}. It is  natural to ask whether or not these models generate $C^\ast$-diagonals and how special these diagonals are? This is the question we shall examine in this note, in the simplest nontrivial case, where the  Hopf algebra is  the Drinfeld quantum double of the  group $\ZM_2$,  so that the physical model  is Kitaev's toric code  \cite{Kitaev1997}. 

It was already observed in \cite{AlickiJPA2007} that the  commuting  operators in the toric code generate a masa $\Cc$ for $M_{2^\infty}$ with Cantor spectrum. We shall prove here that this masa is in fact a $C^\ast$-diagonal. The proof engages the concepts of frustration-free nets of quantum spin models and their frustration-free ground states \cite{DET1}. In that context, local topological quantum order (LTQO) is a verifiable property (see definition~\ref{Def:LTQO}) that ensures uniqueness of frustration-free ground states. Our proof reduces to showing that each pure state of $\Cc$ is a frustration-free ground state for a net of frustration-free quantum spin models displaying the LTQO property (see section~\ref{Sec:CDiagonal}).

One might expect the diagonal induced by the toric code to exhibit genuinely exotic features. After all, the toric code is the prototypical example of a topologically ordered quantum spin model: it supports anyonic excitations with fractional statistics, displays long-range entanglement, and exhibits topological degeneracies when deployed on triangulations of surfaces of genus $g\geq 1$ \cite{KitaevAOP2003}. By contrast, the standard diagonal of $M_{2^\infty}$ arises from a purely classical product structure. It is therefore far from obvious that the diagonal generated by the star and face operators of the toric code should be equivalent to the standard one. Yet, the main result of this paper shows that, despite the topological nature of the underlying Hamiltonian, the associated $C^\ast$-diagonal carries no additional structure at the level of automorphism equivalence. While our work proves the automorphism equivalence, finding the explicit equivalence remains an open problem which seems to require fresh ideas. For example, we show that the most obvious isomorphism between the two $C^\ast$-diagonals, already identified and used in \cite{AlickiJPA2007}, does not lift to an automorphism of  $M_{2^\infty}$ (see Proposition~\ref{Prop:AutoM}).

The main elements of our proof can be summarized as follows. First, any $C^\ast$-diagonal inclusion endows the parent $C^\ast$-algebra with a presentation in the form of a twisted groupoid $C^\ast$-algebra. Furthermore, the isomorphism class of the underlying twisted groupoid is a complete invariant for $C^\ast$-diagonal inclusions \cite{Kum}. These statements remain valid in the more general context of Cartan inclusions \cite{Ren1}, which is not needed here but could be relevant for more involved quantum spin models. Therefore, an isomorphism between the twisted groupoids associated with the toric-code diagonal $\Cc$ and the standard diagonal of $M_{2^\infty}$ implies automorphism equivalence of the two $C^\ast$-diagonal inclusions. In fact, since we are dealing only with AF-relations for which all twists are trivial, it is enough to compare the Weyl groupoids of the two $C^\ast$-diagonal inclusions. We do so by computing their Krieger complete invariants \cite{Krieger}, which are identical (see section~\ref{Sec:Equivalence}).

In conclusion, the star and face operators of the toric code generate a $C^\ast$-diagonal for $M_{2^\infty}$, but this $C^\ast$-diagonal is no more special than the standard diagonal, at least from the automorphism equivalence perspective.
\vspace{0.2cm}

\noindent{\bf Acknowledgements.} Our study was supported by the U.S. National Science Foundation through the grant CMMI-2131760, and by U.S. Army Research Office through contract W911NF-23-1-0127. The authors acknowledge fruitful discussions with Nigel Higson and Kang Li.

\section{The toric code model: A short review} 

 The toric code model is defined over the square lattice $\Ll$, whose sets of unoriented edges $e$ and of vertices $v$ are denoted by $E$ and $V$, respectively, and endowed with the discrete topology (hence any compact subset is necessarily finite). The dual graph $\tilde \Ll$ plays an equally important role. Its vertices $\tilde v \in \tilde V$ correspond to the faces of $\Ll$, and its edges $\tilde e \in \tilde E$ intersect those of $\Ll$ transversally. Throughout, we will identify the faces of $\Ll$ with the vertices of $\tilde \Ll$, and use $e\in \tilde v$ to specify that edge $e\in E$ belongs to the boundary of face $\tilde v$. Moreover, we view $\Ll$ and $\tilde \Ll$ as drawn in the same plane, hence statements like $\rho \cap \tilde \rho \neq \emptyset$ make sense for paths of the direct and dual lattices. Given a topological space $X$, we denote by $\Kk(X)$ the family of its compact subsets and, for a subset $S\subseteq X$, we denote its indicator function by $1_S$. We recall that $(\Kk(X),\subseteq)$ are directed sets. We will often consider nets over $\Kk(E)$ and their limits (see {\it e.g.} \eqref{Eq:LTQO} should be interpreted in this context.
 
 Each edge $e \in E$ carries the finite-dimensional Hilbert space $\Hh_e=\CM^2$ and the matrix algebra $\mathcal{A}_e=M_2(\CM)$. For any $\Lambda \in \Kk(E)$, we may define the finite Hilbert space $\Hh_\Lambda:=\bigotimes_{e\in \Lambda}\Hh_e$ and the $C^\ast$-algebra $\Aa_\Lambda=\mathbb{B}(\Hh_\Lambda)$ of linear operators over $\Hh_\Lambda$. The local algebra of observables is given by
$\Aa_{\rm loc}:=\bigcup_{\Lambda\in \Kk(E)} \Aa_{\Lambda}$
with natural inclusions $a\mapsto a\otimes {\bf1}_{\Lambda\setminus \Lambda'}$ for $\Lambda'\subset \Lambda$. The $C^\ast$-closure of this algebra is called the quasilocal-algebra of observables and we shall denote it by $\Aa$. Obviously, it is isomorphic to $M_{2^\infty}$.

We now introduce relevant elements from $\Aa$. In our introductory remarks, we mentioned a set of commuting operators generated by actions of quantum double of $\ZM_2$. The outcome of the procedure is simple enough to be presented without going through the algorithm. Indeed, the commuting operators of the toric code are the star and face operators associated to vertices $v\in V$ and $\tilde v \in \tilde V$,
\begin{equation}\label{Eq:StarFace}
    A_v:=\prod_{e \ni v}\sigma_e^x, \qquad     B_{\tilde v}:=\prod_{e\in \tilde v}\sigma_e^z
\end{equation}
where $\sigma^x_e$ and $\sigma^z_e$ stand for the Pauli matrices from the algebra $\Aa_e$. Notice that $A_v$ and $B_{\tilde v}$ are local commuting symmetries, $A_v^2 = B_{\tilde v}^2 = 1$.  Another set of relevant elements consists of the open ribbon operators
\begin{equation}\label{Eq:Ribbons}
    F_\rho^z := \prod_{e\in \rho} \sigma_e^z, \quad F_{\tilde \rho}^x := \prod_{e \cap \tilde \rho \neq \emptyset} \sigma_e^x,
\end{equation}
where $\rho$ and $\tilde \rho$ are finite continuous and non self-intersecting paths on $\Ll$ and $\tilde \Ll$, respectively. Note that the ribbon operators are also symmetries and, when $\rho$ and $\tilde \rho$ reduce to just one edge, the ribbon operators reduce to the generators $\sigma_e^x$ and $\sigma_e^z$ of $\Aa$. Consequently, the ribbon operators generate the quasi-local algebra $\Aa$. Furthermore, if $\partial \rho$ and $\partial \tilde \rho$ denote the sets of the end vertices of the paths, then the ribbon operators enter into the following relations with the star and face operators:
\begin{equation}\label{Eq:RibbonRelations}
\begin{aligned}
& F_\rho^z A_v = (-1)^{1_{\partial\rho}(v)} A_v F_\rho^z,   \quad  & F_\rho^z B_{\tilde v} =  B_{\tilde v}F_\rho^z   \\
&  F_{\tilde \rho}^x B_{\tilde v} = (-1)^{1_{\partial \tilde \rho}(\tilde v)} B_{\tilde v}F^x_{\tilde \rho}, \quad & F_{\tilde \rho}^x A_v = A_v F^x_{\tilde \rho}
\end{aligned}
\end{equation}

The net of local Hamiltonians for the toric code \cite{KitaevAOP2003} is constructed from the commuting projections
\begin{equation}\label{Eq:Proj}
P_v : =\tfrac{1}{2}({\bf 1}+A_v), \quad P_{\tilde v} :=  \tfrac{1}{2}({\bf 1}+B_{\tilde v})
\end{equation}
as  
\begin{equation}\label{eq: Kitaev hamiltonian}
    H_\Lambda:=\sum_{P_v \in \Aa_\Lambda} P_v^\perp+\sum_{P_{\tilde v} \in \Aa_\Lambda}P_{\tilde v}^\perp,\qquad \Lambda\in \Kk(E).
\end{equation}
The inner-limit derivation corresponding to $\{H_\Lambda\}$ generates a strongly-continuous one-parameter group of automorphisms over $\Aa$. The spectrum of each $H_\Lambda$, as an element of $\Aa_\Lambda$, is positive and includes $0$. The spectral projection corresponding to that lowest eigenvalue can be written for each $\Lambda$ as 
\begin{equation}\label{eq: proj ground state}
    P_\Lambda= \prod_{P_v\in \Aa_\Lambda} P_v  \prod_{P_{\tilde v}\in \Aa_\Lambda} P_{\tilde v}.
\end{equation} 
This defines a net $\{P_\Lambda\}_{\Lambda\in \Kk(E)}$ of local \emph{frustration-free} proper projections on the $AF$-algebra $\Aa$ in the sense of \cite[Definition 2.2]{DET1}. Their key property is that $P_\Lambda \leq P_{\Lambda'}$ for any pair $\Lambda \geq \Lambda' \in \Kk(E)$. We will use both $\{H_\Lambda\}$ and $\{P_\Lambda\}$ interchangeably to encode the Kitaev model.

A state $\omega$ on $\mathcal{A}$ is called a \emph{frustration-free ground state} for a frustration-free net $\{Q_\Lambda\}$ of projections if $\omega(Q_\Lambda) = 1$ for all $\Lambda \in \Kk(E)$ \cite{DET1}. We supply a short argument showing that the toric code has a unique frustration-free ground state. The following property plays a key role here and elsewhere:

\begin{definition}[\cite{DET1}]\label{Def:LTQO}
A net $\{Q_\Lambda\}$ of projections in $\mathcal{A}$ is said to satisfy the \emph{local topological quantum order} (LTQO) condition if, for every local observable $X \in \Aa_{\rm loc}$, one has
\begin{equation}\label{Eq:LTQO}
\lim_{\Lambda} \| Q_\Lambda X Q_\Lambda - \omega_\Lambda(X) Q_\Lambda \| = 0,
\end{equation}
where
\[
\omega_\Lambda(X) = \frac{\mathrm{Tr}(Q_\Lambda X Q_\Lambda)}{\mathrm{Tr}(Q_\Lambda)}
\quad \ {\rm if} \ X \in \Aa_\Lambda
\]
and $0$ otherwise. 
\end{definition}

\begin{remark}
    In the above definition, since $X\in \Aa_{\rm loc}$, there always exists $\Lambda_0 \in \Kk(E)$ such that $X \in \Aa_{\Lambda_0}$, hence the limit \eqref{Eq:LTQO} makes perfect sense.
\end{remark}
\
\begin{theorem}[\cite{DET1}]
If a frustration-free net of proper projections $\{Q_\Lambda\}$ satisfies the LTQO property, then the net converges to a minimal projection in the double dual of $\Aa$. Consequently, there exists a unique frustration-free ground state $\omega$, which is moreover pure, and it is explicitly given by the weak$^\ast$ -limit
\[
\omega = \lim_{\Lambda} \omega_\Lambda.
\]
\end{theorem}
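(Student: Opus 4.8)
The plan is to prove this as a convergence result for the net $\{Q_\Lambda\}$ of frustration-free proper projections, using the LTQO property as the single driving hypothesis. The key object to track is the net of normalized functionals $\omega_\Lambda(X) = \mathrm{Tr}(Q_\Lambda X Q_\Lambda)/\mathrm{Tr}(Q_\Lambda)$ defined initially on $\Aa_\Lambda$ and, via the nested structure, on $\Aa_{\rm loc}$. First I would verify that $\{\omega_\Lambda(X)\}$ is eventually Cauchy for each fixed local $X$: once $X \in \Aa_{\Lambda_0}$, for all $\Lambda \geq \Lambda_0$ the LTQO estimate forces $Q_\Lambda X Q_\Lambda$ to be close in norm to the scalar multiple $\omega_\Lambda(X) Q_\Lambda$, and comparing two such stages $\Lambda, \Lambda'$ (both above $\Lambda_0$) against the common sub-block structure shows $|\omega_\Lambda(X) - \omega_{\Lambda'}(X)| \to 0$. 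This yields a well-defined limit $\omega(X) := \lim_\Lambda \omega_\Lambda(X)$ on $\Aa_{\rm loc}$; boundedness ($|\omega_\Lambda(X)| \leq \|X\|$, since each $\omega_\Lambda$ is a state precomposed with compression by $Q_\Lambda$) lets me extend $\omega$ by continuity to a state on all of $\Aa$.

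Next I would identify the weak$^\ast$ limit in the double dual. The monotonicity $Q_\Lambda \leq Q_{\Lambda'}$ for $\Lambda \geq \Lambda'$ means $\{Q_\Lambda\}$ is a decreasing net of projections, so in the von Neumann algebra $\Aa^{\ast\ast}$ it converges in the strong (indeed weak$^\ast$) topology to a projection $Q_\infty = \bigwedge_\Lambda Q_\Lambda$. The substance of the theorem is that $Q_\infty$ is \emph{minimal}. To see this, I would show that $Q_\infty \Aa^{\ast\ast} Q_\infty$ is one-dimensional, i.e. $Q_\infty X Q_\infty = \omega(X) Q_\infty$ for every $X$. On local observables this is precisely the LTQO identity passed to the limit: $\|Q_\Lambda X Q_\Lambda - \omega_\Lambda(X) Q_\Lambda\| \to 0$ together with $\omega_\Lambda(X) \to \omega(X)$ gives $Q_\infty X Q_\infty = \omega(X) Q_\infty$ after compressing by $Q_\infty \leq Q_\Lambda$; density of $\Aa_{\rm loc}$ and normality of multiplication by the fixed projections $Q_\infty$ on bounded sets extend this to all of $\Aa^{\ast\ast}$. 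Minimality of $Q_\infty$ is then immediate, since any subprojection $R \leq Q_\infty$ satisfies $R = R Q_\infty R = \omega(\cdot)$-scalar times $R$ forcing $R \in \{0, Q_\infty\}$.

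From minimality the remaining conclusions follow formally. A minimal projection in $\Aa^{\ast\ast}$ determines a pure state, and the state it determines is exactly $\omega$, because $\omega(X) = \mathrm{Tr}_{Q_\infty}(Q_\infty X Q_\infty)$ matches the limit computed above; purity is equivalent to the associated cyclic representation being irreducible, which the one-dimensionality of $Q_\infty \Aa^{\ast\ast} Q_\infty$ guarantees. Frustration-freeness of $\omega$ is direct: since $Q_\infty \leq Q_\Lambda$ for every $\Lambda$, we get $\omega(Q_\Lambda) = 1$ for all $\Lambda$. For uniqueness, I would take any frustration-free ground state $\omega'$, so that $\omega'(Q_\Lambda) = 1$ for all $\Lambda$, hence $\omega'$ is supported on $Q_\infty$ in the sense that its normal extension satisfies $\omega'(Q_\infty) = 1$; combined with minimality of $Q_\infty$ this pins $\omega'$ down to the unique state living on the one-dimensional corner, namely $\omega$.

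The main obstacle I anticipate is the careful handling of the limit in the double dual rather than purely at the level of states. The LTQO hypothesis is a norm statement about the compressions $Q_\Lambda X Q_\Lambda$, but the conclusion about minimality lives in $\Aa^{\ast\ast}$, so I must justify interchanging the net limit of projections (weak$^\ast$) with the norm-limit LTQO estimate; the cleanest route is to fix $X$ local, compress the LTQO inequality by $Q_\infty$ on both sides using $Q_\infty Q_\Lambda = Q_\infty$, and only then pass to the limit, keeping every estimate at finite $\Lambda$ before taking the infimum of projections. Establishing that $Q_\infty$ is genuinely nonzero (so that $\omega$ is a state and not the zero functional) also deserves care, but this is secured by $\omega_\Lambda$ being states with $\omega_\Lambda(Q_\Lambda) = 1$, which survives the limit.
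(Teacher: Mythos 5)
The paper does not actually prove this theorem — it is imported verbatim from \cite{DET1} and used as a black box — so there is no in-paper argument to compare against; your proposal has to stand on its own, and it does. Your architecture is the natural one suggested by the statement itself, and all the essential mechanisms are present and correct: (i) the frustration-free monotonicity $Q_{\Lambda'} \leq Q_\Lambda$ for $\Lambda' \supseteq \Lambda$, combined with the LTQO estimate compressed by the \emph{smaller} projection (using $Q_{\Lambda'} Q_\Lambda = Q_{\Lambda'}$), gives $|\omega_\Lambda(X) - \omega_{\Lambda'}(X)| \leq \epsilon_\Lambda + \epsilon_{\Lambda'}$ on comparable regions and hence the Cauchy property defining $\omega$ on $\Aa_{\rm loc}$; (ii) the decreasing net $\{Q_\Lambda\}$ converges $\sigma$-strongly to $Q_\infty = \bigwedge_\Lambda Q_\Lambda$ in $\Aa^{\ast\ast}$, and compressing LTQO by $Q_\infty$ yields $Q_\infty X Q_\infty = \omega(X) Q_\infty$ on $\Aa_{\rm loc}$, then on $\Aa$ by norm density, then on $\Aa^{\ast\ast}$ by Kaplansky density together with separate weak$^\ast$-continuity of multiplication and weak$^\ast$-continuity of the normal extension $\tilde\omega$; (iii) minimality plus the support-projection argument (Cauchy--Schwarz against $\tilde\omega'(1 - Q_\infty) = 0$) delivers purity and uniqueness in one stroke.

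Two spots should be tightened when you write this out, though neither is a fatal gap. First, the phrase ``comparing two such stages against the common sub-block structure'' should be replaced by the explicit compression mechanism just described — it is exactly the same trick you later use with $Q_\infty Q_\Lambda = Q_\infty$, applied at finite volume, with incomparable regions handled through a common upper bound in the net. Second, your dependency order around $Q_\infty \neq 0$ is circular as written: you derive frustration-freeness of $\omega$ from $Q_\infty \leq Q_\Lambda$, which presupposes $\tilde\omega(Q_\infty) = 1$, while non-vanishing of $Q_\infty$ is in turn justified by frustration-freeness ``surviving the limit.'' The non-circular order is: first establish $\omega(Q_{\Lambda_0}) = 1$ directly from the finite-volume identity $\omega_\Lambda(Q_{\Lambda_0}) = 1$ for all $\Lambda \supseteq \Lambda_0$ (an immediate consequence of $Q_\Lambda Q_{\Lambda_0} Q_\Lambda = Q_\Lambda$), which proves $\omega$ is a frustration-free ground state; then conclude $\tilde\omega(Q_\infty) = \lim_\Lambda \tilde\omega(Q_\Lambda) = 1$ by weak$^\ast$-continuity of the normal extension, so $Q_\infty \neq 0$; only then invoke minimality for purity and uniqueness. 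With that reordering the argument is complete.
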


\begin{theorem}\cite[Theorem III.4]{Penneys}\label{teo: unique gs}
    The frustration-free net of projections \eqref{eq: proj ground state} satisfies the LTQO property. More precisely, for  any $X\in \Aa_\Lambda$ there exists $\Delta\supset \Lambda$ such that 
    $$P_\Delta XP_\Delta=\omega_\Delta(X)P_\Delta.$$
\end{theorem}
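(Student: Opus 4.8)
The plan is to prove the stronger exact identity $P_\Delta X P_\Delta=\omega_\Delta(X)P_\Delta$ for a linear basis of $\Aa_\Lambda$ and then pass to general $X$ by linearity. Since $\sigma^y_e$ is a scalar multiple of $\sigma^x_e\sigma^z_e$, every element of $\Aa_\Lambda$ is a finite linear combination of Pauli strings
$$X=\lambda\,\prod_{e\in S_x}\sigma^x_e\,\prod_{e\in S_z}\sigma^z_e,\qquad \lambda\in\{\pm1,\pm i\},\ S_x,S_z\subseteq\Lambda .$$
I would first record that, once the desired identity holds for some $\Delta$, it persists for every larger region, because the stabilizer factors of $P_\Delta$ responsible for it survive in $P_{\Delta'}$ for $\Delta'\supset\Delta$. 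Hence it suffices to treat a single string and afterwards take $\Delta$ large enough to serve all strings occurring in a given $X$ simultaneously.

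For a single string the dichotomy is governed by the commutation rules generalizing \eqref{Eq:RibbonRelations}: since $A_v$ is a product of $\sigma^x$ and $B_{\tilde v}$ a product of $\sigma^z$, one has $A_vX=(-1)^{|\{e\in S_z:\,e\ni v\}|}XA_v$ and $B_{\tilde v}X=(-1)^{|\{e\in S_x:\,e\in\tilde v\}|}XB_{\tilde v}$. If some star or face operator $G$ anticommutes with $X$, I would choose $\Delta$ so large that the projection $\tfrac12({\bf 1}+G)$ is one of the factors of $P_\Delta$; writing $P_\Delta=\tfrac12({\bf 1}+G)R$ with $R$ commuting with $G$ and setting $Y=RXR$ (which still anticommutes with $G$), the relations $G^2={\bf 1}$ and $GY=-YG$ give $({\bf 1}+G)Y({\bf 1}+G)=0$, hence $P_\Delta X P_\Delta=0$. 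Then $\omega_\Delta(X)=0$ as well, so the identity holds with both sides zero.

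The substantive case is when $X$ commutes with every star and face operator, so that all the exponents above are even. This says exactly that $S_z$ is an even subgraph (a $1$-cycle) of $\Ll$ and, dually, that $S_x$ is a $1$-cycle of $\tilde\Ll$. Here I would invoke the triviality of the first homology of the plane: every finite $1$-cycle bounds, so there is a finite set of faces $T\subseteq\tilde V$ with $S_z=\{e:\ e$ borders exactly one face of $T\}$ and a finite set of vertices $U\subseteq V$ with $S_x=\{e:\ e$ has exactly one endpoint in $U\}$. Since each edge borders exactly two faces and has exactly two endpoints, these boundary relations become the operator identities $\prod_{e\in S_z}\sigma^z_e=\prod_{\tilde v\in T}B_{\tilde v}$ and $\prod_{e\in S_x}\sigma^x_e=\prod_{v\in U}A_v$, whence $X=\lambda\prod_{v\in U}A_v\prod_{\tilde v\in T}B_{\tilde v}$. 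Choosing $\Delta$ to contain the supports of all these stabilizers and using $A_vP_v=P_v$, $B_{\tilde v}P_{\tilde v}=P_{\tilde v}$ together with commutativity of all the projections yields $XP_\Delta=\lambda P_\Delta$, hence $P_\Delta X P_\Delta=\lambda P_\Delta=\omega_\Delta(X)P_\Delta$.

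I expect the only real obstacle to be this last homological step: the identification of the strings commuting with all stabilizers with the group generated by the stabilizers themselves is precisely where the planar geometry enters, since it is the absence of homologically nontrivial (logical) loops that rules out additional commuting strings. Everything else — the reduction to Pauli strings, the anticommuting case, and the assembly by linearity and monotonicity in $\Delta$ — is routine. Summing the two cases over the strings appearing in an arbitrary $X\in\Aa_\Lambda$ gives the displayed exact identity, from which the LTQO limit of Definition~\ref{Def:LTQO} is immediate.
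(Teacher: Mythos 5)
Your proof is correct, but there is nothing in the paper to compare it against line by line: the paper does not prove this statement at all, it imports it wholesale from \cite[Theorem III.4]{Penneys}, where LTQO is established for Kitaev quantum double models of an \emph{arbitrary} finite group by representation-theoretic means. Your argument is the elementary stabilizer-formalism proof available precisely because the toric code is the abelian case $\ZM_2$: expand $X$ in the Pauli-string basis; a string that anticommutes with some $A_v$ or $B_{\tilde v}$ is annihilated by $P_\Delta(\cdot)P_\Delta$ as soon as that stabilizer's projection is a factor of $P_\Delta$ (and then $\omega_\Delta(X)=0$ automatically, by taking the trace); a string commuting with all stabilizers has $S_z$ and $S_x$ finite $\ZM_2$-cycles in $\Ll$ and $\tilde\Ll$, and since $H_1(\RM^2;\ZM_2)=0$ these bound finite sets of faces resp. vertices, so the string \emph{is} a product of stabilizers times $\lambda$ and satisfies $XP_\Delta=\lambda P_\Delta$. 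Both cases check out, including the sign bookkeeping (the $\sigma^x$-block stays to the left of the $\sigma^z$-block in both expressions, so no phase is lost). Your persistence claim is also valid, and in fact holds in full generality with no case analysis: if $P_\Delta XP_\Delta=cP_\Delta$ and $\Delta'\supseteq\Delta$, then writing $P_{\Delta'}=R'P_\Delta=P_\Delta R'$ with $R'$ the product of the additional commuting projections gives $P_{\Delta'}XP_{\Delta'}=R'(P_\Delta XP_\Delta)R'=cP_\Delta (R')^2=cP_{\Delta'}$, and $\omega_{\Delta'}(X)=c$ follows by tracing; this is what lets you serve all strings of $X$ with one $\Delta$ and deduce the net limit of Definition~\ref{Def:LTQO}. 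The trade-off between the two routes: yours is self-contained and transparent, but leans on the Pauli calculus and planar homology (the "no logical operators in the plane" fact you correctly flag as the crux), hence is special to the toric code; the cited proof covers non-abelian quantum doubles, where local operators are no longer spanned by strings that merely commute or anticommute with the Hamiltonian terms, at the cost of substantially heavier machinery.
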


\begin{remark}
We need the LTQO framework stated in the generality considered in \cite{DET1} because it will be later applied to generalizations of the toric code. Ref.~\cite[Theorem III.4]{Penneys} supplies the crucial check of LTQO condition for the standard toric code.
\end{remark}

Thus, the toric code has a unique frustration-free ground state $\omega$. We can describe the spectral properties of the dynamics generated by $\{H_\Lambda\}$ on the GNS representation of $\Aa$ corresponding to $\omega$. To simplify the notation, we use $A$ instead of $\pi_\omega(A)$ for $A \in \Aa$. Then the mentioned dynamics is generated by the unbounded operator $H = \sum_{v \in V} P_v^\perp+ \sum_{\tilde v \in \tilde V} P_{\tilde v}^\perp$, and:

\begin{proposition}[\cite{KitaevAOP2003}]
    The spectrum of $H$ is $2\NM$ and the lowest eigenvalue is non-degenerate. If $P_\omega$ denotes the spectral projection onto the lowest eigenvalue, then the  projection onto the eigenvalue $2n$ is given by
    $$
    P_{2n} =\sum_{k=0}^n\sum_{\bm v_{2k}, \tilde{\bm v}_{2n-2k}} R(\bm v_{2k}, \tilde{\bm v}_{2n-2k})^\ast \, P_\omega \, R(\bm v_{2k}, \tilde{\bm v}_{2n-2k}),
    $$
    where $\bm v_{2k} = (v_1,\ldots,v_{2k})$ and $\tilde{\bm v}_{2n-2k}=(\tilde v_1,\ldots,\tilde v_{2n-2k})$ are tuples of distinct vertices of the direct and dual lattices, respectively, and
$$
R(\bm v_{2k}, \tilde{\bm v}_{2n-2k}) = \prod_{i=1}^k F_{\rho_i}^z \, \prod_{j=1}^{n-k} F_{\tilde \rho_j}^x,
$$
where the first product is over any set of $k$ ribbons with $\cup_i \partial \rho_i = \bm v_{2k}$, and similarly for the second product.
\end{proposition}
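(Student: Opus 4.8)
The plan is to diagonalize $H$ via the joint spectral decomposition of the mutually commuting family $\{A_v,B_{\tilde v}\}$, noting that each term $P_v^\perp=\tfrac12({\bf 1}-A_v)$ and $P_{\tilde v}^\perp=\tfrac12({\bf 1}-B_{\tilde v})$ is a non-negative projection. First I would record that, since $\omega$ is a frustration-free ground state, $\omega(P_\Lambda)=1$ forces $\pi_\omega(P_v)\Omega=\pi_\omega(P_{\tilde v})\Omega=\Omega$ for the GNS cyclic vector $\Omega$, that is $A_v\Omega=B_{\tilde v}\Omega=\Omega$ for all $v,\tilde v$; hence $H\Omega=0$. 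Because $H$ is a sum of non-negative projections, $H\psi=0$ holds precisely when $A_v\psi=B_{\tilde v}\psi=\psi$ for all $v,\tilde v$, so $\ker H$ equals the joint $(+1)$-eigenspace, which is the range of $P_\omega=\lim_\Lambda\pi_\omega(P_\Lambda)$. By the uniqueness theorem preceding \cref{teo: unique gs}, this limit is the image of a minimal projection of $\Aa^{\ast\ast}$; as $\omega$ is pure and $\pi_\omega$ irreducible, $P_\omega$ is a rank-one projection $|\Omega\rangle\langle\Omega|$. This yields at once that $0$ is the lowest eigenvalue and that it is non-degenerate.

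Next I would decompose the rest of $\Hh_\omega$ into excitation sectors. Since the ribbon operators generate $\Aa$, the space $\Hh_\omega$ is the closed linear span of the vectors $R\,\Omega$, with $R=\prod_iF_{\rho_i}^z\prod_jF_{\tilde\rho_j}^x$ a finite product of ribbons. The relations \eqref{Eq:RibbonRelations} give $A_vR=(-1)^{n_v}RA_v$ and $B_{\tilde v}R=(-1)^{\tilde n_{\tilde v}}RB_{\tilde v}$, where $n_v$ (resp.\ $\tilde n_{\tilde v}$) counts the ribbons $\rho_i$ (resp.\ $\tilde\rho_j$) having $v$ (resp.\ $\tilde v$) as an endpoint; hence each $R\Omega$ is a joint eigenvector with charges at the sites where $n_v$ is odd, fluxes where $\tilde n_{\tilde v}$ is odd, and $H$-eigenvalue equal to the total number of excited sites. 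The parity identity $\sum_v n_v=\sum_i|\partial\rho_i|$ is even, because each path contributes $0$ or $2$ endpoints; therefore the number of charges is even, and likewise the number of fluxes. Consequently the eigenvalue of $H$ on any $R\Omega$ is a sum of two even integers, proving $\mathrm{spec}(H)\subseteq 2\NM$; conversely $k$ disjoint $F^z$-ribbons together with $n-k$ disjoint $F^x$-ribbons realize every value $2n$, so $\mathrm{spec}(H)=2\NM$.

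It then remains to identify $P_{2n}$. I would argue that each joint eigenspace labelled by a fixed configuration $(\bm v_{2k},\tilde{\bm v}_{2n-2k})$ is one-dimensional: for any $\psi$ in such a sector and any $R=R(\bm v_{2k},\tilde{\bm v}_{2n-2k})$ whose endpoints are exactly these sites, the same sign computation gives $A_v(R\psi)=(-1)^{n_v+1_{\bm v_{2k}}(v)}R\psi=R\psi$ and $B_{\tilde v}(R\psi)=R\psi$, so $R\psi$ lies in the one-dimensional vacuum sector and, $R$ being unitary, $\psi\in\CM\,R^\ast\Omega$. The vector $R^\ast\Omega$ is moreover independent of the chosen ribbons, since two systems with the same endpoints differ by closed loops, and a contractible $F^z$-loop equals the product of the enclosed face operators $B_{\tilde v}$ (dually, an $F^x$-loop equals a product of star operators $A_v$), all of which fix $\Omega$. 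Hence $R^\ast P_\omega R=|R^\ast\Omega\rangle\langle R^\ast\Omega|$ is the rank-one projection onto that sector, and summing over all configurations with $2k$ charges and $2n-2k$ fluxes for $0\le k\le n$---which are mutually orthogonal and exhaust the energy-$2n$ subspace---assembles exactly $P_{2n}$.

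The one genuinely non-elementary ingredient is the non-degeneracy of the ground state, i.e.\ that the joint $(+1)$-eigenspace is one-dimensional: on the infinite plane this is not a combinatorial identity but is exactly what the minimality of $\lim_\Lambda P_\Lambda$, guaranteed by the LTQO theorem, provides. The remaining points are bookkeeping---verifying that distinct configurations give orthogonal eigenvectors and that the finite-energy sectors exhaust $\Hh_\omega$, and confirming the topological independence of $R^\ast\Omega$ on the ribbon representatives so that the sum defining $P_{2n}$ is well posed.
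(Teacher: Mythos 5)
The paper itself gives no proof of this proposition---it is stated as review material with a citation to \cite{KitaevAOP2003}---so there is no internal argument to compare yours against; judged on its own, your proof is correct. You invoke the LTQO results quoted from \cite{DET1} and Theorem~\ref{teo: unique gs} exactly where they are genuinely needed, namely to make $\lim_\Lambda \pi_\omega(P_\Lambda)$ a rank-one projection (non-degeneracy of the ground state), and the rest is sound stabilizer bookkeeping: the ribbon vectors span the GNS space because the ribbon operators are self-adjoint unitaries generating $\Aa$; the endpoint-parity argument forces even eigenvalues; and conjugating a fixed excitation sector back to the vacuum by a matching ribbon unitary yields one-dimensionality of each joint eigenspace, whence $P_{2n}$ is the strongly convergent sum of the mutually orthogonal rank-one projections $R^\ast P_\omega R$. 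Two details deserve sharper phrasing, though neither is a gap. First, the independence of $R^\ast P_\omega R$ from the ribbon representatives needs slightly more than ``two systems with the same endpoints differ by closed loops'': the two systems may pair the endpoints differently, so the correct statement is that the mod-$2$ sum of their edge sets is a $1$-cycle and hence, the plane having trivial first homology, a product of face (respectively star) operators; combined with the observation that the residual signs produced by anticommuting factors cancel inside a rank-one projection, this gives $R^\ast P_\omega R = (R')^\ast P_\omega R'$. Second, the proposition's sum over \emph{tuples} of distinct vertices must be read as a sum over unordered configurations, as you implicitly do; summing over ordered tuples would overcount each sector by $(2k)!\,(2n-2k)!$ and the right-hand side would fail to be a projection.
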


\section{The $C^\ast$-diagonal generated by the toric code}
\label{Sec:CDiagonal}

For convenience, we recall again the definition:
\begin{definition}\label{Def:CDiagonal}
 A sub-$C^\ast$-algebra $\Bb$ of $\Aa$ is called a $C^\ast$-diagonal if:
 \begin{enumerate}
     \item $\Bb$ is a regular masa of $\Aa$, {\rm i.e.} the set of normalizers
$$N_{\Aa}(\Bb):=\big\{ n\in \Aa\mid n^* \Bb n\subset \Bb\;\;\text{and}\;\;n\Bb n^*\subset\Bb\big\} $$
generates $\Aa$ as a $C^*$-algebra.
     \item $\Bb$ has the unique extension property: any pure state over $\Bb$ extends uniquely over $\Aa$ as a pure state.
 \end{enumerate}
\end{definition}

Now, let 
$$
\mathcal{C}=C^*\{A_v,B_{\tilde v}\,|\, v\in V, \ \tilde v \in \tilde V\}
$$ 
be the sub-$C^\ast$-algebra of $\Aa$ generated by the star and face symmetries. In this section, we prove that $\Cc$ is a $C^\ast$-diagonal of $\Aa$. 

\begin{proposition}
    $\mathcal{C}$ is a regular masa for $\Aa$.
\end{proposition}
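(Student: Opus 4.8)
The plan is to prove the two defining properties separately: regularity is essentially a restatement of the commutation relations already recorded, while the maximal–abelian property carries the real content.

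\emph{Regularity} follows at once from \eqref{Eq:RibbonRelations}. Each ribbon operator is a self-adjoint unitary, and those relations show that conjugation by $F_\rho^z$ or $F_{\tilde\rho}^x$ sends every generator $A_v$, $B_{\tilde v}$ to $\pm$ itself, hence preserves the generating set of $\Cc$; thus every ribbon operator normalizes $\Cc$. Since the ribbon operators generate $\Aa$ (they specialize to the elementary $\sigma_e^x$, $\sigma_e^z$ when $\rho,\tilde\rho$ are single edges), the set of normalizers of $\Cc$ generates $\Aa$, which is exactly regularity. The inclusion $\Cc \subseteq \Cc' \cap \Aa$ is clear because the generators commute pairwise, so it remains only to prove $\Cc' \cap \Aa \subseteq \Cc$.

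For the masa property I would work with the unique tracial state $\tau$ on $\Aa \cong M_{2^\infty}$ and the orthonormal basis of $L^2(\Aa,\tau)$ furnished by the Pauli strings $w=\bigotimes_e Q_e$ with $Q_e \in \{1,\sigma^x_e,\sigma^y_e,\sigma^z_e\}$ and all but finitely many factors equal to $1$. Each $w$ is a self-adjoint unitary, and for every generator $g \in \{A_v,B_{\tilde v}\}$ one has $gwg = \pm w$. The crucial lemma is combinatorial: \emph{a Pauli string $w$ commutes with every $A_v$ and $B_{\tilde v}$ if and only if $w$ is a scalar multiple of a finite product of star and face operators, i.e. $w \in \Cc$.} Encoding $w$ by its $x$-support $a=(a_e)$ and $z$-support $b=(b_e)$ in $\{0,1\}^E$, commutation with $A_v$ reads $\sum_{e\ni v} b_e \equiv 0$ and commutation with $B_{\tilde v}$ reads $\sum_{e\in \tilde v} a_e \equiv 0$ modulo $2$. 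The first condition says $b$ is a finite cycle on $\Ll$ and the second says $a$ is a finite cut on $\Ll$, equivalently a finite cycle on the dual lattice $\tilde\Ll$.

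The heart of the matter — and the step I expect to be the main obstacle — is converting these two conditions into membership in $\Cc$ via planar duality and the vanishing of the first homology of the plane. Every finite cycle on $\Ll$ bounds, so $b$ is the boundary of a finite face set $F$ and $\prod_e(\sigma_e^z)^{b_e}=\pm\prod_{\tilde v\in F}B_{\tilde v}$; symmetrically, every finite cut on $\Ll$ is the coboundary of a finite vertex set $S$ (a cycle on $\tilde\Ll$, which is again a planar square lattice with trivial $H_1$), so $\prod_e(\sigma_e^x)^{a_e}=\pm\prod_{v\in S}A_v$. Multiplying, $w$ is a scalar multiple of $\bigl(\prod_{v\in S}A_v\bigr)\bigl(\prod_{\tilde v\in F}B_{\tilde v}\bigr)\in\Cc$, which proves the lemma; the delicate points are the finiteness book-keeping and the cycle/cut duality on the infinite lattice. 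With the lemma in hand I would conclude as follows. Let $E\colon\Aa\to\Cc$ be the trace-preserving conditional expectation retaining the Pauli strings that lie in $\Cc$ and annihilating the rest (well defined because those retained strings span a dense $*$-subalgebra of $\Cc$). Given $x\in\Cc'\cap\Aa$ and a Pauli string $w\notin\Cc$, the lemma supplies a generator $g$ with $gwg=-w$; trace invariance together with $gxg=x$ then yields $\tau(wx)=\tau(g\,wx\,g)=-\tau(wx)$, so $\tau(wx)=0$. For $w\in\Cc$ the module property of $E$ gives $\tau(wE(x))=\tau(wx)$, while for $w\notin\Cc$ both $\tau(wx)$ and $\tau(wE(x))$ vanish. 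Hence $x-E(x)$ is orthogonal to every basis element, so $\|x-E(x)\|_2=0$, and faithfulness of $\tau$ forces $x=E(x)\in\Cc$. This establishes $\Cc'\cap\Aa\subseteq\Cc$ and completes the proof.
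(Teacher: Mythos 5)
Your proposal is correct, but only its regularity half coincides with the paper's argument. There, exactly as in your first paragraph, the relations \eqref{Eq:RibbonRelations} show that conjugation by the self-adjoint unitary ribbon operators maps the generators of $\Cc$ to $\pm$ themselves, hence preserves $\Cc$, and since the ribbon operators include the single-edge $\sigma_e^x,\sigma_e^z$ they generate $\Aa$; regularity follows. For the masa property, however, the paper gives no argument at all: it cites \cite{AlickiJPA2007} and remarks that maximality also follows from the unique extension property proved later via LTQO. Your Pauli-basis argument is therefore a self-contained re-proof of the cited result. Its homological core --- a Pauli string commutes with every $A_v$ and $B_{\tilde v}$ iff its $z$-support is a finite cycle on $\Ll$ and its $x$-support a finite cycle on $\tilde\Ll$, and since $H_1(\RM^2;\ZM_2)=0$ both bound finite $2$-chains, so the string is a phase times a product of stars and faces --- is exactly the mechanism behind the Alicki--Fannes--Horodecki result, and your Fourier-coefficient argument then converts this into $\Cc'\cap\Aa\subseteq\Cc$. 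What your route buys is self-containedness and an elementary, checkable combinatorial statement; what the paper's route buys is brevity, plus the observation that maximality is in any case subsumed by the unique extension property, which must be proved anyway to get a $C^\ast$-diagonal.

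One step you should tighten is the conditional expectation $E$. As you define it (keep the Pauli strings lying in $\Cc$, annihilate the rest), it is a priori only the orthogonal projection in $L^2(\Aa,\tau)$, and your final conclusion $x=E(x)\in\Cc$ requires $E$ to map $\Aa$ into the norm-closed algebra $\Cc$, not merely into its $L^2$-closure (equivalently, into the von Neumann algebra generated by $\Cc$), which is strictly larger. The repair is standard but should be stated: on each finite-dimensional $\Aa_\Lambda$, the keep/kill map is the unique trace-preserving conditional expectation onto the $\ast$-subalgebra spanned by the retained strings (the retained set is closed under the Pauli product, and if $w$ is retained then $ww'$ is retained iff $w'$ is, which gives the bimodule property), hence it is norm-contractive; these maps are compatible as $\Lambda$ grows, so they extend by continuity to a contraction $E\colon\Aa\to\Cc$. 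With that in place, the rest of your argument --- $\tau(wx)=0$ for strings $w\notin\Cc$ via an anticommuting generator, $\tau(wE(x))=\tau(wx)$ for $w\in\Cc$ via the module property, and faithfulness of $\tau$ --- closes correctly.
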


\begin{proof}
The proof that $\Cc$ is a masa can be found in \cite{AlickiJPA2007}. This also follows from its unique extension property, proven below. Furthermore, from \eqref{Eq:RibbonRelations}, we can see that the conjugations with the ribbon operators leave the algebra $\Cc$ invariant. Since the ribbon operators generate the entire algebra $\Aa$, the regularity follows too.
\end{proof}

For the second property listed in \ref{Def:CDiagonal}, we need to develop several tools. It will be useful to introduce a simplifying notation by setting $W=V\cup \tilde V$, and by placing the star and face operators \eqref{Eq:StarFace} under the map $W\ni w \mapsto S_w$, where $S_w=A_w$ if $w \in V$ and $S_w=B_w$ otherwise.

\begin{proposition}\label{Prop:Omega}
   The Gelfand spectrum ${\rm Spec}(\Cc)$ of $\Cc$ is homeomorphic to the Cantor set
   \begin{equation}
    \Omega=\{-1,1\}^W=\big\{f\colon W \to \{-1,1\}\big\}, \quad W=V \cup \tilde V.
\end{equation}  
\end{proposition}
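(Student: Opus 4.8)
The plan is to identify $\Cc$ with the group $C^\ast$-algebra of $G:=\bigoplus_{w\in W}\ZM/2\ZM$ and to read off the spectrum by Pontryagin duality, the entire difficulty being the \emph{absence of relations} among the generators.

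First I would record that the $S_w$ are mutually commuting self-adjoint unitaries with $S_w^2={\bf 1}$; commutativity of a star and a face operator holds because $A_v$ and $B_{\tilde v}$ share an even number of edges. Hence $\Cc$ is unital and commutative, so $\Cc\cong C(\mathrm{Spec}(\Cc))$, and the evaluation map
\[
\Phi\colon \mathrm{Spec}(\Cc)\to\{-1,1\}^W=\Omega,\qquad \Phi(\chi)(w)=\chi(S_w),
\]
is well defined because $\chi(S_w)\in\RM$ and $\chi(S_w)^2=\chi(S_w^2)=1$. Continuity of $\Phi$ is immediate from the weak-$\ast$ topology on the spectrum and the product topology on $\Omega$, and injectivity follows because the $S_w$ generate $\Cc$, so two characters agreeing on all $S_w$ agree on a dense $\ast$-subalgebra. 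As a continuous injection of a compact space into a Hausdorff space, $\Phi$ is a homeomorphism onto its image, and it remains only to prove that $\Phi$ is onto.

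Surjectivity of $\Phi$ is equivalent to injectivity of the canonical surjection $q\colon C^\ast(G)\cong C(\Omega)\twoheadrightarrow\Cc$ sending the generator indexed by $w$ to $S_w$, since a surjective unital $\ast$-homomorphism out of $C(\Omega)$ is the restriction to a closed subset $K\subseteq\Omega$ and $\Phi$ identifies $\mathrm{Spec}(\Cc)$ with that $K$. To show $q$ is injective I would pull the unique tracial state $\tau$ of $\Aa\cong M_{2^\infty}$ back along $q$ and identify $\tau\circ q$ with Haar integration on the compact group $\Omega=\widehat G$. This identification reduces to the computation of $\tau$ on the monomials $S_F:=\prod_{w\in F}S_w$, $F\subseteq W$ finite. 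Here $\tau$ annihilates every nontrivial Pauli string $\prod_e\sigma_e^{a_e}$ (at least one $a_e\neq 0$) because $\mathrm{tr}\,\sigma^x=\mathrm{tr}\,\sigma^y=\mathrm{tr}\,\sigma^z=0$, so the key point is that each $S_F$ with $F\neq\emptyset$ is a scalar of modulus one times a \emph{nontrivial} Pauli string. Writing $F_V=F\cap V$ and $F_{\tilde V}=F\cap\tilde V$, one has $\prod_{v\in F_V}A_v=\prod_{e\in\partial F_V}\sigma_e^x$, supported on the set $\partial F_V$ of edges with exactly one endpoint in $F_V$, and $\prod_{\tilde v\in F_{\tilde V}}B_{\tilde v}$ is supported on the dual coboundary of $F_{\tilde V}$, with $\sigma_e^x\sigma_e^z\propto\sigma_e^y$ on any shared edge. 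Since $\Ll$ is infinite and connected, a nonempty finite set of direct (resp.\ dual) vertices has nonempty (co)boundary, so the support of $S_F$ is nonempty whenever $F\neq\emptyset$; hence $\tau(S_F)=0$ for $F\neq\emptyset$ and $\tau({\bf 1})=1$.

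These are exactly the Fourier coefficients of Haar measure, so $\tau\circ q$ is Haar integration; equivalently, since the $S_w$ commute and square to ${\bf 1}$ one has $S_F^\ast=S_F$ and $S_FS_{F'}=S_{F\triangle F'}$, giving $\tau(S_FS_{F'})=\delta_{F,F'}$, so the monomials $\{S_F\}$ are $\tau$-orthonormal and in particular linearly independent. As Haar measure on $\Omega$ has full support, $\tau\circ q$ is a faithful state, whence $q$ has trivial kernel and $\Phi$ is onto, completing the identification $\mathrm{Spec}(\Cc)\cong\Omega$. I expect the main obstacle to be precisely this no-relations step: it is where the noncompactness of $\Ll$ is essential, for on a torus one would have genuine relations such as $\prod_{v\in V}A_v={\bf 1}$ that would cut $\mathrm{Spec}(\Cc)$ down to a proper closed subset of $\Omega$.
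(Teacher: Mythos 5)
Your proof is correct, and it is actually more complete than the paper's own. The paper proves this proposition in two sentences that share the skeleton of your first paragraph: characters of $\Cc$ are determined by their values on the generators $S_w$ (your injectivity of $\Phi$), and conversely ``any function $W\to\{-1,1\}$ can be extended to a multiplicative linear functional'' (your surjectivity) --- but the second, and only nontrivial, step is left as a bare assertion. That step is exactly your ``no relations'' point: if the generators satisfied a relation $\prod_{w\in F}S_w=\pm{\bf 1}$ for some finite nonempty $F\subseteq W$, then sign assignments violating it could not extend, and ${\rm Spec}(\Cc)$ would be a proper closed subset of $\Omega$. Your trace argument closes this gap rigorously: writing $S_F$ as a Pauli string supported on the edge boundary of $F\cap V$ and the dual coboundary of $F\cap\tilde V$, noting these supports are nonempty on the infinite connected lattice, you get $\tau(S_F)=\delta_{F,\emptyset}$ for the unique trace $\tau$ of $\Aa$, so the pullback of $\tau$ along the canonical surjection $q\colon C(\Omega)\twoheadrightarrow\Cc$ is the Haar state; faithfulness of the Haar state (full support of Haar measure) then forces $\ker q=0$. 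This is precisely where planarity enters, as your closing remark about the torus correctly identifies: on the torus $\prod_{v\in V}A_v={\bf 1}$ and $\prod_{\tilde v\in\tilde V}B_{\tilde v}={\bf 1}$ are genuine relations. In short, the two proofs have the same structure, but where the paper asserts, you prove; the cost is the extra machinery (group $C^\ast$-algebra, uniqueness of the trace on $M_{2^\infty}$, faithfulness of Haar integration), and the benefit is an argument that would detect exactly how the statement fails on compact lattices.
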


\begin{proof}
Notice that any multiplicative linear functional on $\Cc$ is completely determined by its values on the generators $S_w$. Since $\{S_w\}_{w\in W}$ is a family of commuting self-adjoint symmetries, every such functional $\pi$ must satisfy $\pi(S_w)\in\{-1,1\}$ for all $w\in W$. Therefore, there is a natural identification ${\rm Spec}(\Cc)\subseteq\{-1,1\}^W$.
 On the other hand, any function $W\to \{-1,1\}$ can be extended to a multiplicative linear functional over $C^\ast(S_w, \, w \in W)$ since there are no relations between the $S_w.$
\end{proof}

The above homeomorphism is explicitly provided by the map 
\[
    {\rm Spec}(\Cc) \ni \pi \mapsto (\pi(S_w))_{w \in W}\in \Omega.
\]
Under this identification, $S_w$ becomes the function $S_w(f)=f(w)$, for $f\in \Omega$ and $w\in W$. We shall denote by $\omega_f$ the pure state over $\mathcal{C}$ given by evaluation at $f\in \Omega.$ The unique frustration-free ground state $\omega$ of the toric code model satisfies the condition $\omega(S_w)=1$ for all $w\in W$. As a result, one can identify the restriction of $\omega$ to $\mathcal{C}$ with the evaluation ${\rm ev}_{1_\Omega}$ at the configuration $ 1_\Omega\in \Omega$ that takes the value $1$ for all $w\in W$. In fact, it is also true that $\omega$ is the unique pure extension of  ${\rm ev}_{1_\Omega}$ \cite[Proposition 2.2]{Naijk11}. Our task is to show that all pure states of $\mathcal{C}$ share the same property. To this end, we associate to any $f\in \Omega$ the frustration-free net of proper projections
\begin{equation}\label{eq: projections}
    P_\Lambda(f)=\prod_{Q_w(f) \in \Aa_\Lambda} Q_w(f) \in \Aa_\Lambda, \quad Q_w(f)= \tfrac{1}{2}(1+f(w)S_w).
\end{equation} 
Note that $Q_w(f)$ are commuting projections that reduce to $P_w$ from \eqref{Eq:Proj} if $f=1_\Omega$.  Furthermore, if $f_w\in\Omega$ is obtained from $1_\Omega$ by flipping only the value at $w$, then $
Q_w(f_w)=1-P_w.$

\begin{proposition} 
 Let $f \in \Omega$. Then any extension over $\Aa$ of the pure state $\omega_f$ is a frustration-free ground state for the net  $\{P_\Lambda(f)\}$.
\end{proposition}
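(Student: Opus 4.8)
The plan is to reduce everything to the single observation that $P_\Lambda(f)$ already lies in $\Cc$, so that its value under any extension is forced by $\omega_f$ alone. First I would note that each factor $Q_w(f)=\tfrac{1}{2}(1+f(w)S_w)$ is a polynomial in the generator $S_w$ and hence belongs to $\Cc$. Since $\Cc$ is an algebra and the product defining $P_\Lambda(f)$ is finite (for a fixed compact $\Lambda\in\Kk(E)$ only finitely many $w\in W$ have $Q_w(f)\in\Aa_\Lambda$), the projection $P_\Lambda(f)$ lies in $\Cc$ as well. Consequently, if $\tilde\omega$ is any state on $\Aa$ extending $\omega_f$, then $\tilde\omega(P_\Lambda(f))=\omega_f(P_\Lambda(f))$, and the problem is reduced to evaluating $\omega_f$ on an element of its own domain.

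Next I would use that $\Cc$ is commutative and that $\omega_f$ is the pure state given by point evaluation at $f\in\Omega\cong{\rm Spec}(\Cc)$; a pure state on a commutative $C^\ast$-algebra is a character, hence multiplicative. Therefore $\omega_f(P_\Lambda(f))=\prod_{Q_w(f)\in\Aa_\Lambda}\omega_f(Q_w(f))$, and each factor is computed directly from $\omega_f(S_w)=f(w)$:
\[
\omega_f(Q_w(f))=\tfrac{1}{2}\bigl(1+f(w)\,\omega_f(S_w)\bigr)=\tfrac{1}{2}\bigl(1+f(w)^2\bigr)=1,
\]
since $f(w)\in\{-1,1\}$. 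Multiplying these gives $\omega_f(P_\Lambda(f))=1$ for every $\Lambda\in\Kk(E)$, whence $\tilde\omega(P_\Lambda(f))=1$, which is precisely the defining condition for $\tilde\omega$ to be a frustration-free ground state for the net $\{P_\Lambda(f)\}$.

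Because this argument only uses the restriction $\tilde\omega|_\Cc=\omega_f$, it applies to \emph{every} extension of $\omega_f$, pure or not — indeed this is the reason the statement can be asserted for arbitrary extensions rather than just pure ones. I do not expect a genuine obstacle here: the whole content is the remark that $P_\Lambda(f)\in\Cc$ together with the multiplicativity of the character $\omega_f$. The only point requiring minor care is the finiteness of the product, so that $P_\Lambda(f)$ is an honest element of $\Cc$ rather than a limit; this is guaranteed by the compactness (finiteness) of $\Lambda$.
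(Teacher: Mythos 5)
Your proposal is correct and is essentially the paper's own argument: both rest on the observation that $P_\Lambda(f)\in\Cc$, so any extension is forced to agree with $\omega_f$ there, combined with the computation $\omega_f\bigl(Q_w(f)\bigr)=\tfrac{1}{2}\bigl(1+f(w)^2\bigr)=1$. Your write-up merely makes explicit the multiplicativity of the character $\omega_f$ and the finiteness of the product, which the paper leaves implicit.
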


\begin{proof}
  This is a consequence of the fact that $P_\Lambda(f)$'s reside inside $\Cc$, hence $\eta\big(P_\Lambda(f)\big)=\omega_f\big(P_\Lambda(f)\big)$ for any extension $\eta$ of $\omega_f$. The statement then follows because $\omega_f$, being pure, is a character of $\Cc$ and $\omega_f\big(Q_w(f)\big)=1$ for all $w \in W$.
\end{proof}

If the net \eqref{eq: projections} displays the LTQO property, then it has a unique frustration-free ground state, hence $\omega_f$ must have a unique extension over $\Aa$.  We prepare to show that this is indeed the case. A symmetry on $\Aa$ will be an automorphism $\alpha$ such that $\alpha^2={\rm id}$. Following \cite{BlackadarAM1990}, we say that automorphism $\alpha$  on $\Aa$ is locally representable if $\alpha(\Aa_\Lambda)= \Aa_\Lambda$ for all $\Lambda\in \Kk(E)$. The following technical Lemma states that $\Aa$ admits a family of locally representable symmetries satisfying a number of useful conditions.

\begin{lemma}\label{lem: auto}
There is a family of locally representable symmetries $\{\alpha_w\}_{w \in W}$ which commute pairwise under the composition and $\alpha_w(P_{w})=1-P_w$ and $\alpha_w(P_{w'})=P_{w'}$ for any pair $w \neq w'$ from $W$.
\end{lemma}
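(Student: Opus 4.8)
The plan is to construct the symmetries $\alpha_w$ explicitly as limits of conjugations by ribbon operators, exploiting the commutation relations \eqref{Eq:RibbonRelations}. The key idea is that to flip a single star or face operator $S_w$ while leaving all others fixed, one conjugates by a ribbon whose boundary meets $w$ and no other vertex of the same type. Concretely, for $w \in V$ I would fix a semi-infinite path $\zeta_w$ on $\Ll$ whose boundary in any finite truncation contains $w$ (for instance a straight ray emanating from $w$), and set $\alpha_w(X) = \lim_n F_{\zeta_w^{(n)}}^z \, X \, F_{\zeta_w^{(n)}}^z$, where $\zeta_w^{(n)}$ is the truncation to the first $n$ edges. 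For $w \in \tilde V$ one uses dual ribbons $F_{\tilde\zeta_w}^x$ instead. Since each $F^z_{\zeta}$ is a self-adjoint unitary (a product of commuting Pauli matrices squaring to $\bf 1$), each truncated conjugation is an inner symmetry, and it is locally representable because conjugation by an operator supported on $\zeta^{(n)}$ preserves $\Aa_\Lambda$ once $\Lambda$ contains that support.

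The first substantive step is \textbf{convergence}: I would argue that for any fixed local $X \in \Aa_{\rm loc}$, the sequence $F^z_{\zeta_w^{(n)}} X F^z_{\zeta_w^{(n)}}$ is eventually constant in $n$. This is because the commutator of $X$ with $\sigma^z_e$ vanishes once $e$ lies outside the (finite) support of $X$, so adding further edges to the ribbon tail has no effect. Hence the limit exists in norm on $\Aa_{\rm loc}$, defines a $\ast$-homomorphism there, and extends by continuity to an automorphism $\alpha_w$ of $\Aa$; that $\alpha_w^2 = \mathrm{id}$ follows from $(F^z)^2 = \bf 1$. The second step is to read off the \textbf{action on the generators} from \eqref{Eq:RibbonRelations}: choosing $\zeta_w$ so that $\partial\zeta_w^{(n)} = \{w, \text{(point at infinity)}\}$ guarantees $1_{\partial\zeta_w^{(n)}}(v) = \delta_{v,w}$ for all large $n$, whence $\alpha_w(A_w) = -A_w$ and $\alpha_w(A_{v'}) = A_{v'}$ for $v' \neq w$, while $\alpha_w(B_{\tilde v}) = B_{\tilde v}$ for every face since $F^z_\rho$ commutes with all $B_{\tilde v}$. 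Translating $A_w \mapsto -A_w$ into the projection language via \eqref{Eq:Proj} gives exactly $\alpha_w(P_w) = \tfrac12(\mathbf 1 - A_w) = 1 - P_w$ and $\alpha_w(P_{w'}) = P_{w'}$, as required; the dual statement for $w \in \tilde V$ is identical with the roles of $A,B$ and $x,z$ exchanged.

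For \textbf{pairwise commutation} of the $\{\alpha_w\}$, I would verify that the defining ribbon operators can be chosen to commute or anticommute as operators, and that in either case the induced conjugation automorphisms commute: conjugation by $U$ and by $U'$ commute as maps whenever $UU' = \pm U'U$, since the sign cancels in the two-sided product $U U' X U' U$. Same-type ribbons $F^z_{\zeta_w}, F^z_{\zeta_{w'}}$ commute outright (all Pauli factors are $\sigma^z$), and cross-type ribbons $F^z_\zeta, F^x_{\tilde\zeta}$ either commute or anticommute depending on the parity of edge overlaps $\zeta \cap \tilde\zeta$, which is harmless by the preceding remark. The \textbf{main obstacle} I anticipate is a geometric one rather than an algebraic one: on the infinite plane I must ensure the chosen rays $\zeta_w$ really do have a well-defined single boundary vertex $w$ in each truncation and do not accidentally create extra boundary contributions, and for faces that the dual ray $\tilde\zeta_w$ genuinely isolates the single face $w$. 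This is the step requiring care, but it is a routine choice of non-self-intersecting paths to infinity and does not affect the norm-convergence or the commutation arguments, which hold for \emph{any} admissible choice.
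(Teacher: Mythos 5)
Your proposal is correct and follows essentially the same route as the paper's own proof: the paper also defines $\alpha_w$ as the norm limit of conjugations by truncated semi-infinite ribbon operators ($F^z_{\zeta_n}$ for stars, $F^x_{\tilde\zeta_n}$ for faces) and reads off the action on the $P_w$ from \eqref{Eq:RibbonRelations}. The only difference is one of detail: you spell out the eventual-constancy convergence argument, the far-endpoint bookkeeping, and the sign-cancellation reason for pairwise commutation, all of which the paper dismisses as ``clear.''
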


\begin{proof}
 Let $\zeta_v$ be a semi-infinite straight horizontal path of edges in $\Ll$, starting at vertex $v$ and progressing to the right, and let $\zeta_v^n$ denote the finite sub-path consisting of its first $n$ edges. For each element $X \in \Aa$ define
$$\alpha_v(X)=\lim_n F^z_{\zeta_v^n} X F^z_{\zeta_v^n}.$$
It is clear that the above limit converges in the operator norm and $\alpha_v^2={\rm id}$, and consequently $\alpha_v$ is a locally representable automorphism $\Aa.$ Similarly, let $\tilde \zeta_{\tilde v}$ be a path on the dual graph $\tilde \Ll$, starting at the vertex $\tilde v$ and progressing to the right, and let $\tilde \zeta_{\tilde v}^n$ be the finite sub-path made up of its first $n$ edges. Then 
$$\alpha_{\tilde v}(X)=\lim_n F_{\tilde \zeta_{\tilde v}^n}^x X F^x_{\tilde \zeta_{\tilde v}^n},$$
defines again a locally representable symmetry on $\Aa$. Then $\alpha_w$'s commute pairwise because all paths $\zeta_v$ and $\tilde \zeta_{\tilde v}$ are non-intersecting and the stated actions of $\alpha_w$'s on $P_w$'s follow from \eqref{Eq:RibbonRelations}.
\end{proof}

\begin{theorem}
    The net of projections $\{P_\Lambda(f)\}$ satisfies the LTQO property for any $f\in \Omega$.
\end{theorem}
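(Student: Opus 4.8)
The plan is to bootstrap from the LTQO property of the distinguished configuration $1_\Omega$, which is already available in the sharp form of \cref{teo: unique gs}, by transporting it with the locally representable symmetries of \cref{lem: auto}. The point of departure is that these symmetries turn the reference projections into the $f$-twisted ones. For each $\Delta\in\Kk(E)$ write $W_\Delta=\{w\in W: S_w\in\Aa_\Delta\}$, a finite set independent of $f$, and set
\[
\alpha_{f,\Delta}=\prod_{w\in W_\Delta,\ f(w)=-1}\alpha_w ,
\]
a finite composition, hence a well-defined locally representable involution. Using $\alpha_w(P_w)=1-P_w$, $\alpha_w(P_{w'})=P_{w'}$ and $Q_w(f)=\tfrac12(1+f(w)S_w)$, a factor-by-factor check gives $\alpha_{f,\Delta}(P_\Delta)=P_\Delta(f)$, where $P_\Delta=P_\Delta(1_\Omega)$ is the reference projection of \eqref{eq: proj ground state}.

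Fix now $X\in\Aa_\Lambda$ and $\Lambda\subseteq\Delta$. Since $\alpha_{f,\Delta}$ is a locally representable involution it preserves $\Aa_\Lambda$, so $\alpha_{f,\Delta}(X)\in\Aa_\Lambda$ and
\[
P_\Delta(f)\,X\,P_\Delta(f)=\alpha_{f,\Delta}\big(P_\Delta\,\alpha_{f,\Delta}(X)\,P_\Delta\big).
\]
This reduces the claim to sandwiching the transported observable $\alpha_{f,\Delta}(X)$ between the reference projections, where \cref{teo: unique gs} is at my disposal. The genuine obstacle is that $\alpha_{f,\Delta}(X)$ depends on $\Delta$: because $\{w:f(w)=-1\}$ is typically infinite, there is no single global symmetry to conjugate by, and \cref{teo: unique gs} cannot be applied to a moving target. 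I would overcome this by observing that the restriction of each $\alpha_w$ to the finite-dimensional algebra $\Aa_\Lambda$ is conjugation by $F^z_{\zeta_w\cap\Lambda}$ or $F^x_{\tilde\zeta_w\cap\Lambda}$ (the tail of the defining ribbon outside $\Lambda$ commutes with $X$ and squares to $1$, hence drops out). These restrictions lie in the finite Pauli group on $\Lambda$, so only finitely many of them, and finitely many of their products, are distinct. Consequently the set $\{\alpha_{f,\Delta}(X):\Delta\supseteq\Lambda\}$ is finite, say $\{Y_1,\dots,Y_m\}\subset\Aa_\Lambda$.

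To finish, I would apply \cref{teo: unique gs} to each $Y_i$, obtaining $\Delta_i\supseteq\Lambda$ with $P_{\Delta_i}Y_iP_{\Delta_i}=\omega_{\Delta_i}(Y_i)P_{\Delta_i}$, and set $\Delta^\ast=\bigcup_{i=1}^m\Delta_i\in\Kk(E)$. The monotonicity $P_\Delta\le P_{\Delta_i}$ for $\Delta\supseteq\Delta_i$ propagates each identity to all $\Delta\supseteq\Delta^\ast$, giving $P_\Delta\,\alpha_{f,\Delta}(X)\,P_\Delta=\omega_\Delta(\alpha_{f,\Delta}(X))\,P_\Delta$. Applying $\alpha_{f,\Delta}$ and invoking $\alpha_{f,\Delta}(P_\Delta)=P_\Delta(f)$ then yields, for all $\Delta\supseteq\Delta^\ast$,
\[
P_\Delta(f)\,X\,P_\Delta(f)=\omega_\Delta(\alpha_{f,\Delta}(X))\,P_\Delta(f),
\]
and taking the trace identifies the scalar with $\omega_\Delta^{f}(X):=\mathrm{Tr}\big(P_\Delta(f)XP_\Delta(f)\big)/\mathrm{Tr}\big(P_\Delta(f)\big)$. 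This is in fact stronger than LTQO: it is the exact intertwining on $\Delta\supseteq\Delta^\ast$, and in particular forces the LTQO norm to vanish in the limit. The only new ingredient beyond \cref{lem: auto} and \cref{teo: unique gs} is the $\Delta$-uniformity of the previous paragraph, i.e.\ taming the infinitely many twisted observables $\alpha_{f,\Delta}(X)$ through the finiteness of the local Pauli group; this is the step I expect to require the most care.
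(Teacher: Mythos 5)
Your proof is correct and follows essentially the same route as the paper: transport the reference net $\{P_\Delta\}$ onto $\{P_\Delta(f)\}$ by a finite composition of the locally representable symmetries of Lemma~\ref{lem: auto}, and reduce to the exact intertwining property of Theorem~\ref{teo: unique gs}. The one genuine difference is the step you single out as ``the genuine obstacle.'' In the paper's argument this obstacle never arises, because Theorem~\ref{teo: unique gs} is invoked in its uniform form: the enlargement $\Delta$ depends only on $\Lambda$, and satisfies $P_\Delta Y P_\Delta=\omega_\Delta(Y)P_\Delta$ for \emph{all} $Y\in\Aa_\Lambda$ simultaneously (this is what the cited result of Chuah et al.\ provides). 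With that quantifier order one fixes $\Delta$ first, then builds the symmetry $\alpha=\alpha_{f,\Delta}$ adapted to that particular $\Delta$, and applies the theorem to $Y=\alpha(X)\in\Aa_\Lambda$; local representability guarantees membership, and there is no moving target. Your resolution --- that the restriction of each $\alpha_w$ to $\Aa_\Lambda$ is conjugation by an element of the finite Pauli group on $\Lambda$, so that $\{\alpha_{f,\Delta}(X):\Delta\supseteq\Lambda\}$ is a finite set --- is a correct and self-contained way to recover uniformity from the pointwise reading of the theorem, and it hands you the slightly stronger conclusion that the exact intertwining holds for \emph{every} $\Delta\supseteq\Delta^\ast$ (the paper obtains the same propagation implicitly from the monotonicity $P_{\Delta'}\le P_\Delta$). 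If uniformity were really missing, though, there is a cheaper route than the Pauli-group count: $\Aa_\Lambda$ is finite dimensional, so apply the pointwise statement to a linear basis of $\Aa_\Lambda$, take the union of the resulting regions, and use linearity of $Y\mapsto P_\Delta YP_\Delta-\omega_\Delta(Y)P_\Delta$.
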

\begin{proof}
    Denote by $P_\Lambda(1_\Omega)\equiv P_\Lambda$ and recall that $\omega_{1_\Omega}\equiv \omega$, the unique frustration-free ground state of the standard toric code. Let $X\in \Aa_{\rm loc}$ be a local observable and $\Lambda$ such that $X\in \Aa_\Lambda.$ From Theorem \ref{teo: unique gs},  there exist $\Delta\supset \Lambda$ such that $P_\Delta YP_\Delta=\omega_\Delta(Y)P_\Delta$ for all $Y\in \Aa_\Lambda$. 
We consider the locally representable symmetry
$$\alpha=\prod_{w\in W^f_\Delta}\alpha_w,\qquad W_\Delta^f=\big\{w\in W\mid f(w)=-1\;\text{and}\; Q_w
(f)\in \Aa_\Delta\big\},$$
where $\alpha_w$'s are the symmetries given in Lemma \ref{lem: auto}.  Observe that $\alpha$ is well-defined since  only finitely many projections $Q_w(f)$ belong to $\Aa_\Delta$, hence $W_\Delta^f$ is finite. Moreover, it follows that $\alpha(P_\Delta(f))=P_\Delta$ and consequently,
\begin{align*}
   \alpha\big(P_\Delta(f) XP_\Delta(f)\big)= P_\Delta \alpha(X)P_\Delta=\omega_\Delta(\alpha(X)) P_\Delta.
\end{align*}
Since $\alpha$ is a symmetry, one gets
\[P_\Delta(f)XP_\Delta(f)=\omega_\Delta(\alpha(X))P_\Delta(f).
\]
The above completes the proof because $X$ is an arbitrary local observable and 
\[\omega_\Delta(\alpha(X))=\frac{1}{{\rm Tr}(P_\Delta)} {\rm Tr}\big(P_\Delta\alpha(X)P_\Delta\big)=\frac{1}{{\rm Tr}(P_\Delta(f))} {\rm Tr}\big(P_\Delta(f)XP_\Delta(f)\big)
\]
where in the last step we used the fact that the local trace is $\alpha$ invariant.
\end{proof}
\begin{corollary}\label{coro: unique extension}
    $\mathcal{C}$ has the unique extension property.
\end{corollary}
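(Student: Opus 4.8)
The plan is to assemble the two ingredients already in place: on the one hand, the pure states of $\Cc$ are exactly the point evaluations $\omega_f$, $f\in\Omega$; on the other hand, any state of $\Aa$ extending a given $\omega_f$ is a frustration-free ground state for the net $\{P_\Lambda(f)\}$, which we have just shown to satisfy LTQO. Since LTQO forces uniqueness of the frustration-free ground state, uniqueness of the extension follows.

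First I would record that, because $\Cc$ is commutative with Gelfand spectrum $\Omega$ (Proposition \ref{Prop:Omega}), its pure states are precisely the evaluations $\omega_f$ at configurations $f\in\Omega$. Hence it suffices to show that every $\omega_f$ admits a unique pure extension to $\Aa$. Fixing $f\in\Omega$, the preceding theorem tells us that the frustration-free net of proper projections $\{P_\Lambda(f)\}$ satisfies the LTQO condition of Definition~\ref{Def:LTQO}. The LTQO theorem of \cite{DET1} quoted above then applies verbatim, producing a unique frustration-free ground state $\eta_f$ for $\{P_\Lambda(f)\}$, which is moreover pure.

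Next I would invoke the earlier proposition stating that \emph{any} extension of $\omega_f$ over $\Aa$ is a frustration-free ground state for $\{P_\Lambda(f)\}$. Combined with the uniqueness just obtained, this forces every extension of $\omega_f$ to coincide with $\eta_f$. Because states on a sub-$C^\ast$-algebra always extend (the set of extensions is a nonempty weak$^\ast$-compact convex face of the state space of $\Aa$), the state $\omega_f$ therefore has exactly one extension, namely $\eta_f$, and $\eta_f$ is pure. This is precisely the unique extension property of Definition~\ref{Def:CDiagonal}(2), and since $f\in\Omega$ was arbitrary it holds for all pure states of $\Cc$.

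I expect no substantive obstacle here, since the hard analytic content (LTQO implies a unique frustration-free ground state) is supplied by the previous results; the remaining work is purely organizational. The only points requiring a word of care are that we have in fact proved uniqueness among \emph{all} states, which trivially implies uniqueness among pure states as required by the definition, and that existence of a pure extension is automatic because the unique frustration-free ground state $\eta_f$ is itself such an extension. Thus the corollary is a direct consequence of the preceding theorem and proposition.
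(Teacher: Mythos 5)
Your proof is correct and follows exactly the route the paper intends: the corollary is left without explicit proof there precisely because it is the assembly of the preceding results (pure states of $\Cc$ are the evaluations $\omega_f$, every extension of $\omega_f$ is a frustration-free ground state for $\{P_\Lambda(f)\}$, and LTQO forces that ground state to be unique and pure), which is what you carry out. Your added remarks on existence of extensions and on uniqueness holding among all states are accurate and only make explicit what the paper takes for granted.
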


\begin{remark}\label{Rem:CondExp}
  Theorem~3.4 in \cite{AndersonTAMS1979} and the above result assure us of the existence of a unique conditional expectation $E\colon \Aa\to \Cc$. This conditional expectation can be described explicitly: for each $a \in \mathcal{A}$, the compression $E(a)$ is determined by the  continuous function 
$E(a)(f) = \tilde{\omega}_f(a),$ where $f\in \Omega$ and  $\tilde{\omega}_f$ denotes the unique pure extension of the state $\omega_f$ on $\mathcal{C}$ to $\mathcal{A}$.
\end{remark} 

\section{Equivalence class of the $C^\ast$-diagonal}
\label{Sec:Equivalence}

In our setting, the standard $C^\ast$-diagonal $\Dd$  of $M_{2^\infty} \simeq \Aa$ takes the form
\begin{equation}
    \Dd=C^*\{\sigma_e^z, \ e\in E\}.
\end{equation} In this section we prove that the $C^\ast$-diagonals $\mathcal{C}$ and $\mathcal{D}$ are equivalent. 

We achieve this by comparing the groupoids associated to various $C^\ast$-inclusions. We recall that a twist of a locally compact, Hausdorff, étale, topologically principal groupoid with unit space $G^{0}$, is a central extension of groupoids
\[
\mathbb{T}\times G^{0}
\;\longrightarrow\;
\Sigma
\;\xrightarrow{\;\;}
G,
\]
where $\mathbb{T}$ acts freely on $\Sigma$ so that $\Sigma/\mathbb{T}\cong G$.
The twist $\Sigma$ is said to be \emph{trivial} if it is isomorphic to $G\times\mathbb{T}$.  Given a $C^\ast$-diagonal inclusion, or more generally, a Cartan inclusion $(A,B)$, the fundamental results of \cite{Kum,Ren1} state that there exists a groupoid $G$, called the Weyl groupoid of the pair, together with a twist $\Sigma$ of $G$ such that
\[
(A,B)\;\cong\;\bigl(C_r^*(G,\Sigma),\, C_0(G^{0})\bigr).
\]
Moreover, the twist $\Sigma$ is uniquely determined by $(A,B)$ up to conjugation. In other words, the twist is a complete invariant of the $C^\ast$-inclusion.

In the construction of the Weyl groupoid of a $C^\ast$-diagonal inclusion, it is shown that each normalizer induces a partial homeomorphism on the Gelfand spectrum of the $C^\ast$-diagonal \cite{Kum}. More concretely, if $X={\rm Spec}(B)$ and $n\in N_A(B)$ is a normalizer, then
$$
\operatorname{dom}(n):=\big\{\phi\in X:\phi(n^*n)\neq 0\big\},
\qquad \operatorname{ran}(n):=\big\{\phi\in X:\phi(nn^*)\neq 0\big\}
$$
are open subsets of $X$, and there is a unique homeomorphism
$
\alpha_n\colon \operatorname{dom}(n)\to \operatorname{ran}(n)
$
characterized by
$$
\phi(n^\ast bn)=\alpha_n(\phi)(b)\phi(n^\ast n),
\qquad b\in B,$$
for every $\phi\in\operatorname{dom}(n)$. Thus, an element of the Weyl groupoid is represented by a pair $(n,\phi)$, where $n$ is a normalizer and $\phi\in\operatorname{dom}(n)$,  modulo the equivalence relation identifying pairs whose associated partial homeomorphisms agree on some neighborhood of $\phi$. If a normalizer happens to be unitary, then it induces a full homeomorphism of $X$. We refer to \cite{Kum, Ren1} for the complete construction of Weyl groupoids. 

Now, from \eqref{Eq:RibbonRelations}, we can see that the ribbon operators are unitary normalizers of $\Cc$. Our first goal is to explicitly characterize the group of their induced homeomorphisms on ${\rm Spec}(\Cc)$. Consider two copies of the edge set $E$, denoted by $E_x$ and $E_z$, and define the locally compact abelian group $\Gamma$ consisting of the set of functions $\gamma : E_x \cup E_z \to \ZM_2$ with compact support, {\it i.e.}, $\gamma^{-1}(-1)\in \Kk(E_x \cup E_z)$, equipped with the final topology. Here and throughout this section, $\ZM_2$ is written multiplicatively. We recall that $\Omega$ was defined as $\{-1,1\}^W$ and we endow the set $\{-1,1\}$ with the group structure of $\ZM_2$. Then, using the embeddings $\mathfrak j_i :E \to E_x \cup E_z$ with $\mathfrak j_i(E)=E_i$ for $i=x,z$, we define a map $\partial\colon \Gamma\to \Omega$ by
\[
\partial\gamma(v)
= \prod_{e \ni v} \gamma\big(\mathfrak j_z(e)\big),
\quad
\partial\gamma(\tilde v)
= \prod_{e \in \tilde v} \gamma\big(\mathfrak j_x(e)\big),
\]
for any  $v \in V$ and $\tilde{v}\in \tilde V$. This yields an element $\partial\gamma\in \Omega$ with compact support. Since $\partial(\gamma_1 
\cdot \gamma_2)=\partial \gamma_1 \cdot \partial \gamma_2$, the image $\partial \Gamma$ is an abelian group inside $\Omega$, and we define the action $\partial \Gamma\times \Omega\to \Omega$ by
\[
(\partial \gamma,f)\;\longmapsto \;\alpha_{\partial \gamma}(f):=\partial\gamma \cdot f.
\]
We equip $\partial \Gamma$ with the final topology.

\begin{proposition}
    The action of $\partial \Gamma$ on $\Omega$ is free and minimal.
\end{proposition}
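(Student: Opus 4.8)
The plan is to reduce both properties to elementary facts about the abelian group $\Omega=\{-1,1\}^W$, since the acting group $\partial\Gamma$ is a subgroup of $\Omega$ operating by translation. For freeness I would simply observe that a translation $f\mapsto \partial\gamma\cdot f$ on a group fixes a point iff $\partial\gamma=1_\Omega$; hence every stabilizer is trivial and the action is free with no further work. For minimality I would use that the orbit of $f$ is the translate $\partial\Gamma\cdot f$ and that translation by $f$ is a self-homeomorphism of $\Omega$, so that all orbits are dense iff $\partial\Gamma$ is dense in $\Omega$. In the product topology density means: for every finite $F\subseteq W$ and every prescribed $\epsilon\colon F\to\{-1,1\}$ there is $\gamma\in\Gamma$ with $\partial\gamma(w)=\epsilon(w)$ for all $w\in F$. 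Everything thus reduces to a finite realizability statement for the map $\partial$.

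Next I would exploit that $\partial$ splits into two independent sectors: $\gamma|_{E_z}$ controls only the direct-vertex values $\partial\gamma(v)$, $v\in V$, while $\gamma|_{E_x}$ controls only the face values $\partial\gamma(\tilde v)$, $\tilde v\in\tilde V$. In each sector $\partial$ is the mod-$2$ boundary operator of the lattice: a finitely supported $\gamma|_{E_z}$ is the indicator of a finite edge set $S\subseteq E$, and $\partial\gamma(v)=-1$ precisely when an odd number of edges of $S$ meet $v$, i.e. when $v$ lies in the boundary of $S$. This is exactly the content of the first relation in \eqref{Eq:RibbonRelations}: conjugating $A_v$ by the ribbon $F_\rho^z$ flips its sign exactly at the endpoints $\partial\rho$. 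The same holds in the dual sector with $F_{\tilde\rho}^x$ and faces.

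I would then establish the realizability statement. Split $F=F_V\sqcup F_{\tilde V}$ and treat the direct sector (the dual one being identical on $\tilde\Ll$). Let $N=\{v\in F_V:\epsilon(v)=-1\}$ be the vertices whose sign must be flipped. By the handshake lemma the boundary of any finite edge set has even cardinality, so $N$ can be a full boundary only when $|N|$ is even; this is the one genuine constraint. If $|N|$ is odd I adjoin an auxiliary vertex $v_0\in V\setminus F_V$ — possible since $V$ is infinite — so that $N\cup\{v_0\}$ has even size. Pairing up the chosen vertices and joining each pair by a finite path on the connected lattice $\Ll$, the mod-$2$ sum of these paths is a finite edge set $S$ with boundary exactly $N$ (resp. $N\cup\{v_0\}$); its indicator is the required $\gamma|_{E_z}$, which matches $\epsilon$ on all of $F_V$. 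Carrying out the same construction on $\tilde\Ll$ produces $\gamma|_{E_x}$, and together they give $\gamma\in\Gamma$ realizing $\epsilon$ on $F$. Hence $\partial\Gamma$ is dense and the action is minimal.

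The step I would treat most carefully — and the only real subtlety — is the parity constraint: $\partial\Gamma$ is not the whole group of finitely supported configurations but the index-four subgroup on which each sector carries an even number of $-1$'s. The point to emphasize is that this global constraint does not obstruct density, because density only requires matching $\epsilon$ on the finite set $F$, and the infinitude of $V$ and $\tilde V$ allows any parity defect to be absorbed into a coordinate outside $F$. Connectivity of $\Ll$ and $\tilde\Ll$, which guarantees that the prescribed vertices can always be joined by finite paths, is the remaining ingredient and is immediate for the square lattice.
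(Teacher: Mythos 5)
Your proposal is correct and follows essentially the same route as the paper: freeness via triviality of stabilizers for a translation action, and minimality by reducing all orbits to the density of the subgroup $\partial\Gamma$ in $\Omega$ via the translation homeomorphism. The only difference is one of detail, not of method: where the paper simply asserts that $\partial\Gamma$ is the (dense) set of finitely supported configurations with even parity in each sector, you actually prove this realizability claim via the mod-$2$ boundary operator, the handshake lemma, and the parity-absorption trick outside the finite window --- a worthwhile elaboration of a step the paper leaves implicit.
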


\begin{proof}
Observe that the image of $\Gamma$ under $\partial$ coincides with the set of functions $f : W \to \ZM_2$ for which $f^{-1}(-1) \cap V$ and $f^{-1}(-1) \cap \tilde V$ are finite and have even cardinalities. This set of functions is dense in $\Omega$, hence the orbit $(\partial \Gamma) \cdot 1_\Omega = \partial \Gamma$ is dense in $\Omega$. If $f \in \Omega$ is any other function, then point-wise multiplication by $f$ defines a homeomorphism over $\Omega$, since $f \cdot f = 1_\Omega$ and point-wise multiplication by $f$ is continuous over $\Omega$. We have
\[
(\partial \Gamma) \cdot f = (\partial \Gamma)\cdot (f \cdot 1_\Omega) = f \cdot (\partial \Gamma)1_\Omega = f \cdot \partial \Gamma,
\]
which is again dense in $\Omega$. As such, the action of $\partial \Gamma$ on $\Omega$ is minimal. The action is also free because $\partial \gamma \cdot f = f$ implies $\partial \gamma \cdot f \cdot f = f \cdot f$, or $\partial \gamma =1_\Omega$.
\end{proof}

\begin{proposition}\label{Prop:AlphaAction}
Let $\Ff$ be the subgroup of unitary elements of $\Aa$ generated by the ribbon operators \eqref{Eq:Ribbons}. Then there exists a group epimorphism $\Ff \ni u \mapsto \partial \gamma_u \in \partial \Gamma$, such that  
\begin{equation}\label{Eq:GammaU}
u^\ast \eta(Q) u = \eta(Q \circ \alpha_{\partial \gamma_u}), \quad \forall \ Q\in C(\Omega).
\end{equation}
Here, $\eta :C(\Omega) \to \Cc$ is the isomorphism established in \ref{Prop:Omega}
\end{proposition}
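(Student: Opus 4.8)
The plan is to reduce everything to the action of conjugation on the generators $S_w$ of $\Cc$ and then read off the resulting sign pattern as an element of $\partial\Gamma$. First I would record that every ribbon operator is a self-adjoint unitary and that, by the commutation relations \eqref{Eq:RibbonRelations}, conjugation by a ribbon sends each generator $S_w$ to $\pm S_w$; for instance $(F_\rho^z)^\ast A_v F_\rho^z=(-1)^{1_{\partial\rho}(v)}A_v$ and $(F_\rho^z)^\ast B_{\tilde v}F_\rho^z=B_{\tilde v}$. I would then observe that the set of unitaries $u$ with the property that $u^\ast S_w u=\epsilon_u(w)\,S_w$ for some $\epsilon_u(w)\in\{-1,1\}$ and every $w\in W$ is a subgroup of the unitary group of $\Aa$: it is closed under products, with $\epsilon_{uv}=\epsilon_u\cdot\epsilon_v$, and under inverses, since $\epsilon_u(w)^2=1$. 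As this subgroup contains all ribbon operators, it contains all of $\Ff$. This produces a well-defined map $\Ff\ni u\mapsto\epsilon_u\in\Omega$ which, by the product formula $\epsilon_{uv}=\epsilon_u\epsilon_v$, is a group homomorphism into $\Omega$.

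The second step is to identify $\epsilon_u$ with $\partial\gamma_u$ for a suitable $\gamma_u\in\Gamma$. For the generator $u=F_\rho^z$ I would set $\gamma_u$ to be the function equal to $-1$ exactly on $\mathfrak j_z(e)$ for $e\in\rho$ (and $+1$ elsewhere), and for $u=F_{\tilde\rho}^x$ the function equal to $-1$ exactly on $\mathfrak j_x(e)$ for the edges $e$ with $e\cap\tilde\rho\neq\emptyset$. Unwinding the definition of $\partial$ gives $\partial\gamma_{F_\rho^z}(v)=\prod_{e\ni v}\gamma_{F_\rho^z}(\mathfrak j_z(e))=(-1)^{|\{e\in\rho\,:\,e\ni v\}|}$, and the key combinatorial point is that a non self-intersecting path meets a vertex in two edges at interior vertices and in one edge at its endpoints, so this exponent has the parity of $1_{\partial\rho}(v)$; together with $\partial\gamma_{F_\rho^z}(\tilde v)=1$ this matches exactly the signs in \eqref{Eq:RibbonRelations}, i.e.\ $\epsilon_{F_\rho^z}=\partial\gamma_{F_\rho^z}$. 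The analogous crossing-parity count of how often $\tilde\rho$ meets the boundary of a face handles $F_{\tilde\rho}^x$. Since both $\epsilon$ and $\partial\gamma$ are multiplicative, $\epsilon_u=\partial\gamma_u$ for all $u\in\Ff$, so the homomorphism takes values in $\partial\Gamma$, and writing $\partial\gamma_u:=\epsilon_u$ fixes the notation. Surjectivity is then immediate: the single-edge ribbons $\sigma_e^z$ and $\sigma_e^x$ lie in $\Ff$ and realize $\partial$ of the indicator flips at $\mathfrak j_z(e)$ and $\mathfrak j_x(e)$, and these generate $\partial\Gamma$.

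Finally I would upgrade the identity from the generators to all of $C(\Omega)$. Fixing $u$, both $Q\mapsto u^\ast\eta(Q)u$ and $Q\mapsto\eta(Q\circ\alpha_{\partial\gamma_u})$ are unital $\ast$-homomorphisms $C(\Omega)\to\Cc$ (the first since conjugation by a unitary is, the second since $\alpha_{\partial\gamma_u}$ is a homeomorphism of $\Omega$ and $\eta$ is a $\ast$-isomorphism). On the coordinate function $Q=S_w$ one computes directly that $Q\circ\alpha_{\partial\gamma_u}(f)=S_w(\partial\gamma_u\cdot f)=\partial\gamma_u(w)\,f(w)$, so both maps send $S_w$ to $\epsilon_u(w)\,S_w=\partial\gamma_u(w)\,S_w$; since the $S_w$ generate $C(\Omega)$ by Stone--Weierstrass, the two homomorphisms coincide, which is precisely \eqref{Eq:GammaU}. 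The hard part will be the middle step: verifying that the purely algebraic sign $\epsilon_u$ coming from the Pauli commutation relations is exactly the image under the boundary-type map $\partial$ of a compactly supported $\ZM_2$-cochain, which is the $\ZM_2$-homological statement that the path endpoints $\partial\rho$ (resp.\ the dual-path endpoints) form the boundary of $\rho$; one must also check consistency on elements of $\Ff$ carrying phases or $\sigma^y$ factors, where multiplicativity of $\epsilon$ guarantees the correct bookkeeping.
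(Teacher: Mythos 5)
Your proof is correct, and it reaches the required epimorphism by a route whose logical organization genuinely differs from the paper's, even though both rest on the same two facts: conjugation by elements of $\Ff$ multiplies each generator $S_w$ by a sign, and the resulting sign pattern is $\partial$ of an explicit compactly supported $\ZM_2$-function on $E_x\cup E_z$. The paper passes to the image ${\rm Ad}_\Ff\subset{\rm Aut}(\Cc)$, observes that it is generated by the single-edge conjugations ${\rm Ad}_{\sigma_e^z},{\rm Ad}_{\sigma_e^x}$ (single edges are themselves ribbons), defines the candidate map only on these generators by ${\rm Ad}_{\sigma_e^i}\mapsto\partial\gamma_e^i$, and then must prove well-definedness: every relation $\prod_k{\rm Ad}_{\sigma_{e_k}^{i_k}}={\rm id}$ must force $\prod_k\partial\gamma_{e_k}^{i_k}=1_\Omega$, which the paper extracts from faithfulness of the induced action on $C(\Omega)$. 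You bypass that relations check entirely: your sign character $\epsilon\colon\Ff\to\Omega$ is defined on all of $\Ff$ at once (conjugation is globally defined, not a formula on words), so multiplicativity is automatic, and the only remaining task is to show the image lands in $\partial\Gamma$, which you verify on the full ribbon generators via the parity count (a non-self-intersecting path meets interior vertices in two edges and endpoints in one); that count is exactly the combinatorial content already packaged in \eqref{Eq:RibbonRelations}, so the step you flag as ``the hard part'' is safe, and your worry about phases or $\sigma^y$ factors is moot for the same reason — e.g.\ the scalar $-{\bf 1}\in\Ff$ simply has $\epsilon=1_\Omega$. Your Stone--Weierstrass argument for promoting agreement on the $S_w$ to all of $C(\Omega)$ plays the role of the paper's appeal to the $S_w$ generating $\Cc$. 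What the paper's route buys in exchange for its extra well-definedness argument is the stronger conclusion ${\rm Ad}_\Ff\simeq\partial\Gamma$; this is not needed for the proposition, and injectivity in fact also drops out of your setup, since $\epsilon_u=1_\Omega$ means ${\rm Ad}_u$ fixes every $S_w$ and hence all of $\Cc$. What your route buys is economy: a homomorphism defined on the group itself never needs its relations checked, and it handles full ribbons directly rather than reducing to single edges first.
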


\begin{proof} The assignment $u \mapsto {\rm Ad}_u$ is a group morphism from $\Ff$ to the group ${\rm Aut}(\Cc)$ of automorphisms of $\Cc$. We denote by ${\rm Ad}_\Ff$ the image of this morphism. Note that, if the morphism $\Ff \to \partial \Gamma$ exists, then relation \eqref{Eq:GammaU} implies that it factors through the automorphism $\Ff \to {\rm Ad}_\Ff$. We will show that this is indeed the case by proving ${\rm Ad}_\Ff \simeq \partial \Gamma$. Note that, although the ribbon operators display non-trivial commutation relations,
\[
F_\rho^z F_{\tilde \rho}^x = (-1)^{|\rho \cap \tilde \rho|} F_{\tilde \rho}^x F_\rho^z,
\]
the non-commutative character disappears under conjugation:
\[
{\rm Ad}_{F_\rho^z} \circ {\rm Ad}_{F_{\tilde \rho}^x} = (-1)^{2|\rho \cap \tilde \rho|} {\rm Ad}_{F_{\tilde \rho}^x}\circ {\rm Ad}_{F_\rho^z}= {\rm Ad}_{F_{\tilde \rho}^x} \circ {\rm Ad}_{F_\rho^z}.
\]
Furthermore, since the ribbon operators are products of $\sigma_e^x$ and $\sigma_{e'}^z$, which are ribbon operators as well, ${\rm Ad}_\Ff$ is generated by ${\rm Ad}_{\sigma_e^z}$ and ${\rm Ad}_{\sigma_{e'}^x}$, $e,e' \in E$. Now, based on \eqref{Eq:RibbonRelations}, for any $w \in W=V\cup \tilde V$,
\begin{equation}\label{Eq:Action1}
    {\rm Ad}_{\sigma_e^z}(S_w)=(-1)^{1_{\partial e}(w) 1_V(w)} S_w, \quad  {\rm Ad}_{\sigma_e^x}(S_w)=(-1)^{1_{\partial e}(w) 1_{\tilde V}(w)} S_w.
\end{equation}
On the other hand, the functions $\gamma_e^x,\gamma_e^z \in \Gamma$, defined by $(\gamma_e^i)^{-1}(-1)=\{\mathfrak j_i(e)\}$, square to the identity, generate $\Gamma$, and
\[
\partial \gamma_e^z(w)=(-1)^{1_{\partial e}(w) 1_V(w)}, \quad \partial \gamma_e^x(w)=(-1)^{1_{\partial e}(w) 1_{\tilde V}(w)}.
\]
They induce the following actions on $C(\Omega)$:
\begin{equation}\label{Eq:Action2}
\begin{aligned}
    & (\alpha_{\partial \gamma_e^z}\, f )(w)=(-1)^{1_{\partial e}(w) 1_V(w)}f(w), \\
    & (\alpha_{\partial \gamma_e^x}\, f )(w)=(-1)^{1_{\partial e}(w) 1_{\tilde V}(w)} f(w).
\end{aligned}   
\end{equation}
As we already mentioned, when viewed as elements of $C(\Omega)$, the symmetries $S_w$ become the functions $S_w(f) = f(w)$ for all $f\in \Omega$ and, as such, 
\begin{equation}\label{Eq:Action3}
    {\rm Ad}_{\sigma_e^i}(S_w)=\eta\big( \eta^{-1}(S_w) \circ \alpha_{\partial \gamma_e^i}\big),\quad i=x,z, \quad e \in E.
\end{equation}
Since $\Cc$ is generated by $S_w$'s, we can conclude
\begin{equation}\label{Eq:Action4}
    {\rm Ad}_{\sigma_e^i}(\eta(Q))=\eta\big( Q \circ \alpha_{\partial \gamma_e^i}\big),\quad i=x,z, \quad Q\in C(\Omega).
\end{equation}

We now show that the assignment ${\rm Ad}_{\sigma_e^i} \mapsto \partial \gamma_e^i$ supplies the group isomorphism ${\rm Ad}_\Ff \simeq \partial \Gamma$. First, we need to address the existing relations among the generators of ${\rm Ad}_\Ff$. Let $\prod_{k=1}^n {\rm Ad}_{\sigma_{e_k}^{i_k}}=1$ be such relation. Then, from iterations of \eqref{Eq:Action4}, 
\[
Q \circ \alpha_{\partial \gamma_{e_1}^{i_1}} \circ \cdots \circ \alpha_{\partial \gamma_{e_n}^{i_n}} =Q \circ \alpha_{\partial \gamma_{e_1}^{i_1} \cdots \partial \gamma_{e_n}^{i_n}} = Q,
\]
for all $Q\in C(\Omega)$. This can be so if and only if $\partial \gamma_{e_1}^{i_1} \cdots \partial \gamma_{e_n}^{i_n} = 1$. Thus, the proposed assignment respects the relations among the generators of ${\rm Ad}_\Ff$, making it into a group morphism. Relation \eqref{Eq:Action4} also assures us that this morphism is injective.\end{proof}

 Let $G_\mathcal{C}=\Omega\rtimes \partial \Gamma$ be the transformation groupoid corresponding to the action $\alpha$, and let $\mathcal{C}\rtimes \partial \Gamma$ denote the associated crossed product algebra. Proposition~\ref{Prop:AlphaAction} assures us that $G_\mathcal{C}$ is a subgroupoid of the Weyl groupoid $G(\mathcal{C})$ associated with the pair $(\mathcal{A},\mathcal{C})$. 
 
 %An important observation is that $G_\Cc$ belongs to the class of AF-groupoids \cite{GTS, Krieger,RenaultBook}.

\begin{proposition}\label{prop: AF}
The groupoid $G_\mathcal{C}$ is an $AF$-relation in the sense of \cite[Definition~3.7]{GTS}. In particular,
$G_\mathcal{C}$ is principal, étale, Hausdorff, locally compact, second countable.
\end{proposition}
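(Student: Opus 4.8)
The plan is to present $G_\mathcal{C}$ as an increasing union of compact open subequivalence relations, which is exactly the content of an $AF$-relation, and then read off the five topological properties. These come essentially for free from the fact that $G_\mathcal{C}=\Omega\rtimes\partial\Gamma$ is the transformation groupoid of the \emph{discrete} group $\partial\Gamma$ (discrete in its final topology, being a quotient of the direct-sum group $\Gamma$) acting on the Cantor set $\Omega$. Indeed, as a topological space $G_\mathcal{C}\cong\Omega\times\partial\Gamma$, so second countability, local compactness and Hausdorffness follow from the corresponding properties of $\Omega$ together with the countability of $\partial\Gamma$; étaleness is immediate for a transformation groupoid of a discrete group; and principality is precisely the freeness of the action, already established above.

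First I would isolate the algebraic mechanism behind the $AF$-property: the group $\partial\Gamma$ is \emph{locally finite}. This is because $\Gamma=\bigoplus_{E_x\cup E_z}\ZM_2$ is an increasing union of finite elementary abelian $2$-groups, and local finiteness passes to the quotient $\partial\Gamma$. To make this quantitative, I would fix a countable exhaustion $\Lambda_1\subseteq\Lambda_2\subseteq\cdots$ of $E$ by finite sets with $\bigcup_n\Lambda_n=E$ (possible since $E$ is countable), and let $\Gamma_n\subseteq\Gamma$ be the finite subgroup of functions supported on $\mathfrak j_x(\Lambda_n)\cup\mathfrak j_z(\Lambda_n)$. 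Then $\partial\Gamma_n:=\partial(\Gamma_n)$ is a finite subgroup, the sequence $\{\partial\Gamma_n\}_n$ is increasing, and $\bigcup_n\partial\Gamma_n=\partial\Gamma$, because every element of $\partial\Gamma$ is $\partial\gamma$ for some compactly supported $\gamma$, and that $\gamma$ is supported in some $\Lambda_n$.

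Next I would set $G_n:=\Omega\rtimes\partial\Gamma_n$ and verify that each $G_n$ is a compact open principal subgroupoid of $G_\mathcal{C}$ with full unit space $\Omega$. Openness holds because $\partial\Gamma_n$ is open in the discrete group $\partial\Gamma$; compactness holds because $\Omega$ is compact and $\partial\Gamma_n$ is finite; principality — hence its interpretation as an equivalence relation — is again the freeness of the action; and the classes are finite since every $\partial\Gamma_n$-orbit has at most $|\partial\Gamma_n|<\infty$ points. Thus $\{G_n\}_n$ is an increasing sequence of compact open finite equivalence relations with $G_\mathcal{C}=\bigcup_n G_n$, which is verbatim the definition of an $AF$-relation in \cite[Definition~3.7]{GTS}; the five listed topological properties then follow, either directly as above or as formal consequences.

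The proof is mostly assembly, and I do not anticipate a genuine obstacle. The one point that needs care — and where I would concentrate the argument — is the interface with the topology: checking that $\partial\Gamma$ really is discrete and locally finite under its final topology, and that the directed family indexed by $\Kk(E)$ can be replaced by the honest \emph{sequence} $\{\Lambda_n\}_n$ required for a (sequential) $AF$ exhaustion, which is what the countability of $E$ buys us. Everything else reduces to the freeness of the action and the local finiteness of $\partial\Gamma$ established through the filtration above.
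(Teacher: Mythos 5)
Your proof is correct, and it reaches the conclusion by a more self-contained route than the paper. The paper's proof is two sentences: it observes that $\partial\Gamma$ is locally finite (every finitely generated subgroup is finite, $\partial\Gamma$ being a group in which every element is an involution) and then invokes \cite[Theorem~3.8]{GTS}, which converts a free action of a countable locally finite group on the Cantor set directly into an AF-relation; freeness (and minimality, which your argument shows is not actually needed for this step) is quoted from the preceding proposition. You use exactly the same two inputs --- local finiteness of $\partial\Gamma$ and freeness of the action --- but rather than citing Theorem~3.8 you effectively reprove the needed direction of it: the exhaustion $\Lambda_n$ of $E$ gives finite subgroups $\partial\Gamma_n=\partial(\Gamma_n)$ with $\bigcup_n\partial\Gamma_n=\partial\Gamma$, and the compact open principal subgroupoids $G_n=\Omega\rtimes\partial\Gamma_n$ exhibit $G_\mathcal{C}$ as an increasing union of compact open subequivalence relations with finite classes, i.e., they verify \cite[Definition~3.7]{GTS} verbatim. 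Both arguments are sound. Yours buys independence from Theorem~3.8 (only the definition is used) and produces the explicit AF filtration, at the cost of the topological bookkeeping --- discreteness of $\partial\Gamma$ in its final topology, \'etaleness, second countability, and the compatibility of the product topology on $\Omega\times\partial\Gamma$ with the inductive limit topology of the $G_n$ (automatic here, since each $G_n$ is clopen in $G_\mathcal{C}$) --- which the paper's citation silently absorbs; all of those checks are done, or correctly reduced to the countability of $E$ and the freeness already established, in your write-up, so there is no gap.
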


\begin{proof}
$\partial\Gamma$ is locally finite since every finitely generated subgroup of $\partial \Gamma$ is finite. The claim then follows from \cite[Theorem~3.8]{GTS} since $\partial \Gamma$-action on $\Omega$ is free and minimal. 
\end{proof}

By a result of Krieger \cite{Krieger} (see also \cite[Theorem~4.10]{Mat}), for an $AF$-relation $G$ with Cantor unit space $G^0$, the ordered dimension group
\[
\big(H_0(G),H_0(G)^+,[1_{C(G^0)}]\big),
\]
as defined in \cite[Definition~3.1]{Mat}, supplies a complete invariant. A computation of it for $G_\Cc$ reveals:

\begin{lemma}\label{Lemma:CDEquiv}
Let $G(\mathcal{D})$ be the Weyl groupoid of the pair $(\mathcal{A},\mathcal{D})$.
Then $G_\mathcal{C}\simeq G(\mathcal{D})$ as groupoids.
Consequently, $\mathcal{C}\rtimes \partial \Gamma \simeq \mathcal{A}$.
\end{lemma}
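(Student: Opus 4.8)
The plan is to reduce everything to Krieger's theorem: since both $G_\Cc$ and $G(\Dd)$ are $AF$-relations with Cantor unit space, the ordered dimension group $\bigl(H_0(G),H_0(G)^+,[1_{C(G^0)}]\bigr)$ is a complete invariant, so it suffices to compute this triple for each and check that they agree. For $G_\Cc$ the $AF$-hypothesis is Proposition~\ref{prop: AF}. For $G(\Dd)$ it is classical: the pair $(\Aa,\Dd)$ is the standard Cartan pair of $M_{2^\infty}$, whose Weyl groupoid is the cofinite (finite-coordinate-change) equivalence relation on $\{-1,1\}^E$; thus $C^\ast(G(\Dd))\simeq\Aa\simeq M_{2^\infty}$ and its ordered dimension group is $\bigl(\ZM[\tfrac12],\ZM[\tfrac12]_+,1\bigr)$ with the standard order. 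Everything therefore comes down to showing $G_\Cc$ has the same invariant.

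For the computation I would use that, for a transformation groupoid of a discrete group, $H_0$ is the group of coinvariants, $H_0(G_\Cc)=C(\Omega,\ZM)_{\partial\Gamma}=C(\Omega,\ZM)\big/\langle Q-Q\circ\alpha_{\partial\gamma}\rangle$, with order unit $[1_\Omega]$ and positive cone the classes of nonnegative functions. First I would exploit the product structure $\Omega=\Omega_V\times\Omega_{\tilde V}$, with $\Omega_V=\{-1,1\}^V$ and $\Omega_{\tilde V}=\{-1,1\}^{\tilde V}$, under which $\partial\Gamma=\Gamma_V\times\Gamma_{\tilde V}$, where $\Gamma_V$ (resp.\ $\Gamma_{\tilde V}$) is the subgroup of finitely supported, even-weight flips of the $V$- (resp.\ $\tilde V$-) coordinates. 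Since $C(\Omega,\ZM)=C(\Omega_V,\ZM)\otimes_\ZM C(\Omega_{\tilde V},\ZM)$ and the action splits as a product, coinvariants of a product action on a tensor product factor, giving $H_0(G_\Cc)=C(\Omega_V,\ZM)_{\Gamma_V}\otimes_\ZM C(\Omega_{\tilde V},\ZM)_{\Gamma_{\tilde V}}$.

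It then remains to compute the single-factor "even" dimension group $D:=C(\{-1,1\}^{\NM},\ZM)_{\Gamma_V}$. Filtering by the first $n$ coordinates, the even-flip subgroup has exactly two orbits on $\{-1,1\}^n$, namely the even- and odd-weight strings, so the level-$n$ coinvariants form $\ZM^2$, and appending a coordinate sends each orbit class to the sum of the two classes at level $n+1$. Hence $D=\varinjlim\bigl(\ZM^2\xrightarrow{\,M\,}\ZM^2\bigr)$ with $M=\left(\begin{smallmatrix}1&1\\1&1\end{smallmatrix}\right)$. Since $M$ annihilates $(1,-1)$ and doubles $(1,1)$, the telescope collapses to $\varinjlim(\ZM\xrightarrow{\times2}\ZM)=\ZM[\tfrac12]$, with the class of the constant function $1$ going to the order unit $1$ and nonnegative functions going to nonnegative dyadics. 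Thus $D\cong\bigl(\ZM[\tfrac12],\ZM[\tfrac12]_+,1\bigr)$ and $H_0(G_\Cc)\cong\ZM[\tfrac12]\otimes_\ZM\ZM[\tfrac12]\cong\ZM[\tfrac12]$ with order unit $1$ and standard cone. This matches $G(\Dd)$, so Krieger's theorem gives $G_\Cc\simeq G(\Dd)$; passing to groupoid $C^\ast$-algebras (the twists being trivial for $AF$-relations) then yields $\Cc\rtimes\partial\Gamma=C^\ast(G_\Cc)\simeq C^\ast(G(\Dd))=\Aa$.

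The step I expect to be the main obstacle is the honest evaluation of $H_0(G_\Cc)$: tracking the parity constraint imposed by $\partial\Gamma$ through the filtration, and verifying that the order unit and the positive cone survive both the factorization of coinvariants and the $\left(\begin{smallmatrix}1&1\\1&1\end{smallmatrix}\right)$-telescope, so that one lands on the \emph{standard} ordered $\ZM[\tfrac12]$ rather than on an abstractly isomorphic group carrying a nonstandard order unit. The bare group isomorphism $\ZM[\tfrac12]\otimes_\ZM\ZM[\tfrac12]\cong\ZM[\tfrac12]$ is immediate, but it is the matching of the order data that actually forces equivalence with the standard diagonal.
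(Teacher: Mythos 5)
Your proposal is correct, and it shares the paper's overall skeleton---both reduce to Krieger's theorem by comparing ordered dimension groups of the two $AF$-relations, and both take the invariant of $G(\Dd)$ as known---but your computation of $H_0(G_\Cc)$ is genuinely different from the paper's. The paper works directly on $\Omega$: it expands an arbitrary $Q\in C(\Omega,\ZM)$ over cylinder sets via inclusion--exclusion, uses the fact that $\partial\Gamma$ acts transitively on cylinders with a fixed base $K$ to collapse every class to an integer multiple of a single cylinder class, and then obtains injectivity and the order data in one stroke from the homomorphism $\varphi([Q])=\int_\Omega Q\,\mathrm{d}\mu$ given by the $\partial\Gamma$-invariant Bernoulli measure, whose image is visibly $\ZM[\tfrac12]$ with cone $\ZM_+[\tfrac12]$ and unit $1$. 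You instead split $\Omega=\Omega_V\times\Omega_{\tilde V}$ and $\partial\Gamma=\Gamma_V\times\Gamma_{\tilde V}$ (the even-support description of $\partial\Gamma$ is established in the paper's proof that the action is free and minimal, so it is available), factor the coinvariants as a tensor product---legitimate, by right-exactness of $\otimes_\ZM$ together with the fact that clopen sets in a product of Stone spaces are finite disjoint unions of rectangles---and compute each factor as the Bratteli telescope $\varinjlim\bigl(\ZM^2,\bigl(\begin{smallmatrix}1&1\\1&1\end{smallmatrix}\bigr)\bigr)\cong\ZM[\tfrac12]$. Your orbit count (even/odd parity classes), connecting matrix, and collapse of the telescope are all correct, and the order data does survive: at level $n$ the cone is $\ZM_+^2$ and the class of $1$ is $(2^{n-1},2^{n-1})$, which map to the nonnegative dyadics and to $1$ under $(a,b)\mapsto(a+b)/2^n$, while the rectangle decomposition shows that the positive cone of $H_0(G_\Cc)$ is exactly the cone generated by products of positives, so the multiplication map $\ZM[\tfrac12]\otimes_\ZM\ZM[\tfrac12]\to\ZM[\tfrac12]$ is an order isomorphism with the correct unit. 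What the paper's measure argument buys is economy: injectivity and positivity come for free, with no need for the product factorization or the cone bookkeeping. What your telescope buys is structure: it exhibits a Bratteli diagram for $G_\Cc$ and explains \emph{why} the invariant is $\ZM[\tfrac12]$---parity is the only finite-level invariant of the even-flip action, and it is washed out in the limit; both arguments conclude identically, via triviality of twists over $AF$-relations, that $\Cc\rtimes\partial\Gamma\simeq C^\ast(G(\Dd))\simeq\Aa$.
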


\begin{proof}
Since both groupoids are $AF$-relations, it suffices to compute the ordered dimension group
for $G_\mathcal{C}$ and verify that it agrees with that of $G(\Dd)$. It is known that 
\[
(H_0(G(\mathcal{D})),H_0(G(\mathcal{D}))^+,[1_{G(\mathcal{D})^0}]) \simeq (\ZM[\tfrac{1}{2}],\ZM_+[\tfrac{1}{2}], 1).
\]
We recall from \cite[Pg.~5]{Mat} that $H_0(G_\Cc)$ is equal to the quotient of $C(\Omega,\ZM)$ by its subgroup generated by $Q-Q \circ \alpha_{\partial \gamma}$, for $Q \in C(\Omega,\ZM)$ and $\partial \gamma \in \partial \Gamma$. We will need a few facts about the family of cylinder subsets
\[
\Omega(K,\epsilon):= \{f \in \Omega\, |\, f|_K = \epsilon\}, \quad K\in \Kk(W), \quad \epsilon : K \to \ZM_2,
\]
which form a basis for the topology of $\Omega$. Finite intersections of cylinder sets are again cylinder sets. If $K\subset K'$, then any $\Omega(K,\epsilon)$ can be refined as
\[
\Omega(K,\epsilon) = \bigcup_{\delta:K'\setminus K\to \ZM_2} \Omega(K', \epsilon \vee \delta).
\]
Lastly, every clopen subset of $\Omega$ is a finite union of cylinder sets.

Now, any $Q \in C(\Omega,\ZM)$ can be sliced by its level sets,
\begin{equation}\label{Eq:QLevelSets}
Q=\sum_{k \in \ZM} k \; 1_{\Omega_k}, \quad \Omega_k =Q^{-1}(k).
\end{equation}
Each $\Omega_k$ is a clopen subset and there are only a finite number of $\Omega_k$'s that are not void, because $Q$ is bounded. Then $\Omega_k=\bigcup_{j=1}^{n_k} \Omega(K_j,\epsilon_j)$ and the inclusion-exclusion formula
\[
1_{\Omega_k} = \sum_{\emptyset \neq J \subset \{1,\ldots,n_k\}} (- 1)^{|J|+1} \, 1_{\bigcap_{j\in J}\Omega(K_j,\epsilon_j)}
\]
together with the facts mentioned above assure us that 
\[
Q=\sum_{n=1}^N a_n \, 1_{\Omega(K_n,\epsilon_n)}, \quad N\in \NM, \quad a_n \in \ZM. 
\]
By taking $K_Q=\cup K_n$ and using refinements of the cylinder sets, we find
\[
Q=\sum_{n=1}^{N_Q} a'_n \, 1_{\Omega(K_Q,\epsilon_n)}, \quad N_Q \in \NM, \quad a'_n \in \ZM. 
\]
Now, for a pair of cylinder sets $\Omega(K,\epsilon)$ and $\Omega(K,\epsilon')$, consider any $\partial \gamma \in \partial \Gamma$ such that $\partial \gamma|_K = \epsilon\cdot \epsilon'$. Such element always exists. Then 
\[
\alpha_{\partial \gamma} \big(\Omega(K,\epsilon)\big) \subseteq \Omega(K,\epsilon'), \quad \alpha_{\partial \gamma}  \big ( \Omega(K,\epsilon') \big ) \subseteq \Omega(K,\epsilon),
\]
and, since $\alpha_{\partial \gamma}$ is its inverse, $\alpha_{\partial \gamma}$ establishes a homeomorphism between $\Omega(K,\epsilon)$ and $\Omega(K,\epsilon')$. As such, $1_{\Omega(K,\epsilon)}$ and $1_{\Omega(K,\epsilon')}$ belong to the same class in $H_0(G_\Cc)$. Therefore, the class of any $Q\in C(\Omega,\ZM)$ can be represented as
\begin{equation}\label{Eq:QExpansion}
    [Q] = a_{\Omega(K_Q,\epsilon_Q)}\, [1_{\Omega(K_Q,\epsilon_Q)}],
\end{equation}
for some cylinder set and integer coefficient.

Now, let $\mu$ be the Bernoulli probability measure on $\Omega$, \textit{i.e.}, $\mu=\bigotimes_W \delta$ with $\delta$ the Haar measure on $\ZM_2$. It is $\partial\Gamma$-invariant, and therefore the map
\[
\varphi\colon H_0(G_\mathcal{C})\to \mathbb{R},
\qquad
\varphi([Q])=\int_\Omega Q \,\mathrm{d}\mu,
\]
is a group homomorphism \cite[Section~6]{Mat}. Since $\varphi(1_{\Omega(K,\epsilon)})= 2^{-|K|}$, it follows from \eqref{Eq:QExpansion} that $\varphi$ is injective and that its image coincides with $\ZM[\frac{1}{2}]$. Furthermore, $H_0(G_\Cc)^+$ coincides with the classes of those $Q\in C(\Omega,\ZM)$ for which the sum \eqref{Eq:QLevelSets} restricts to $\ZM_+$ and, as such, its image through $\varphi$ is $\ZM_+[\frac{1}{2}]$. Lastly, $1_\Omega=1_{\Omega(
\emptyset,\epsilon_\emptyset)}$, hence this cycle is sent by $\varphi$ into $1\in \ZM[\frac{1}{2}]$. \end{proof}

 To complete the main statement of the section, we will use \cite[Proposition 4.13]{Ren1} and, for clarity, we reproduce the relevant part of it here:

\begin{proposition}\cite[Proposition 4.13]{Ren1}\label{Prop:Renault413}
 Let $(G,\Sigma)$ be a twisted etal\'e Hausdorff locally compact second countable groupoid. Let $A=C^\ast_r(G,\Sigma)$ be its reduced $C^\ast$-algebra and $B=C_0(G^0)$. Assume that $G$ is topologically principal. Then the Weyl groupoid of the $C^\ast$-inclusion $(A,B)$ is canonically isomorphic to $G$.
\end{proposition}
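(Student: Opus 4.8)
The statement is the reconstruction half of the Kumjian--Renault correspondence, so the plan is to exhibit a canonical isomorphism between $G$ and the Weyl groupoid $G(A,B)$ built from the normalizers of $B=C_0(G^{0})$ in $A=C^\ast_r(G,\Sigma)$. The first thing I would record is that $(A,B)$ is genuinely a Cartan inclusion: $B$ is regular because twisted convolution by sections supported on open bisections already generates $A$, it carries the faithful conditional expectation $E\colon A\to B$ given by restriction of sections to $G^{0}$, and it is maximal abelian. Maximality is the only nontrivial point, and it is exactly here that topological principality enters: an element commuting with all of $B$ is supported, after applying $E$, on the interior of the isotropy bundle, which equals $G^{0}$ by effectiveness. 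Only then is the Weyl groupoid $G(A,B)$ of \cite{Kum,Ren1} defined and Proposition~\ref{Prop:Renault413} meaningful.

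With the inclusion in hand I would construct the comparison map directly from bisections. For an open bisection $U\subseteq G$ and a continuous compactly supported section $n$ of $\Sigma$ with $\mathrm{supp}(n)\subseteq U$, a short computation with the twisted convolution product shows that $n$ is a normalizer: both $nbn^\ast$ and $n^\ast b n$ lie in $B$ for $b\in B$, and the partial homeomorphism of ${\rm Spec}(B)=G^{0}$ that $n$ induces is precisely $\tau_U:=r|_U\circ(s|_U)^{-1}$. The conditional expectation $E$ is what pins this down --- evaluating $E(n^\ast b\,n)$ recovers $b\circ\tau_U$ on the open domain $s(U)$ --- and it simultaneously reads off the $\TM$-valued phase data of $n$ along $\Sigma$. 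I would then define $\Phi\colon G\to G(A,B)$ by sending $g$ to the Weyl germ of any such $n$ at the point $s(g)$, for any bisection $U\ni g$, and check that the germ equivalence makes this independent of the choices of $U$ and $n$; multiplicativity of $\Phi$ follows from the convolution identity for sections supported on products of bisections.

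It remains to show $\Phi$ is an isomorphism of twisted groupoids. Surjectivity reduces to the fact that every normalizer of $B$ agrees, locally and up to multiplication by a positive element of $B$, with a bisection section: the open support of a normalizer is a bisection, so every Weyl germ is realized by one of the $n$ above. Injectivity is the decisive use of topological principality: if $\Phi(g)=\Phi(g')$ then $s(g)=s(g')$, $r(g)=r(g')$, and the germs of $\tau_U$ and $\tau_{U'}$ agree near $s(g)$, whence $g'g^{-1}$ lies in the interior of the isotropy at $r(g)$; effectiveness forces $g'g^{-1}\in G^{0}$, i.e.\ $g=g'$. That $\Phi$ is a homeomorphism follows by matching the bisection-indexed basic open sets of $G$ with the topology on $G(A,B)$ generated by the normalizer germs, and the twist $\Sigma$ is recovered because the Weyl twist is by construction the quotient of normalizers by positive elements of $B$, which the phase bookkeeping of the sections $n$ identifies with $\Sigma$.

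The main obstacle is the injectivity step together with the maximal abelianness of $B$, both of which stand or fall on topological principality/effectiveness: the germ of a bisection section detects the underlying groupoid element exactly when the interior of the isotropy bundle is trivial, and this is precisely where the hypothesis is indispensable. A secondary technical hurdle is working in the \emph{reduced} algebra --- controlling general normalizers and extracting the homeomorphism and twist from them relies on faithfulness of $E$ and of the regular representation, which hold for $C^\ast_r(G,\Sigma)$ but must be invoked with care.
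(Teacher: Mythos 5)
The paper offers no proof of this proposition at all: it is quoted verbatim from Renault \cite[Proposition 4.13]{Ren1} and used as a black box, so there is no internal argument to compare against. Your sketch correctly reconstructs Renault's own proof of the cited result --- Cartan-ness of $(C^\ast_r(G,\Sigma),C_0(G^0))$, the germ map $\Phi$ built from normalizers supported on open bisections, surjectivity via the fact that every normalizer has open bisection support, and injectivity via effectiveness/topological principality --- so it is essentially the same argument as the one the paper is importing, modulo minor imprecision (e.g.\ the masa argument runs through the support of $j(a)$ on the isotropy bundle rather than ``applying $E$'').
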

We also need the following important observation about AF-relations.
\begin{proposition}\label{prop: twist}
    If $G$ is an AF-relation, then any twist $\Sigma$ of $G$ is trivial.
\end{proposition}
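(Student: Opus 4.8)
The plan is to reduce the triviality of $\Sigma$ to a statement on each finite stage of the $AF$-exhaustion and then to glue the local trivializations by an inverse-limit argument. Write $G=\bigcup_n G_n$ as an increasing union of compact open elementary subgroupoids, as in the definition of an $AF$-relation: each $G_n$ is a compact open principal groupoid whose restriction to the unit space decomposes into finitely many clopen towers, so that $G_n\cong \bigsqcup_i (R_{k_i}\times Z_i)$, where $R_{k_i}$ is the pair groupoid on $k_i$ points and $Z_i\subseteq G^{0}$ is clopen. Recall that $\Sigma$ is trivial precisely when it admits a continuous groupoid homomorphism (a section) $s\colon G\to\Sigma$ splitting the projection $\Sigma\to G$, and the same criterion applies to the restricted twists $\Sigma|_{G_n}$.

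First I would trivialize $\Sigma$ over each $G_n$. Since the twist restricted to a single block $R_{k}\times Z$ is classified by a continuous $\mathbb{T}$-valued $2$-cocycle on a pair groupoid over the totally disconnected space $Z$, and the groupoid cohomology $H^2$ of the pair groupoid on a finite set vanishes, every such cocycle is a coboundary. Continuity of the trivialization is then handled by the total disconnectedness of $Z$, over which the cocycle is locally constant, so that one trivializes on a finite clopen refinement. This produces, for each $n$, a continuous homomorphic section $s_n\colon G_n\to\Sigma$.

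The crux is compatibility. Two homomorphic sections of $\Sigma|_{G_n}$ differ by a continuous character $G_n\to\mathbb{T}$, that is, by an element of $H^1(G_n;\mathbb{T})$, so the family $\{s_n\}$ need not satisfy $s_{n+1}|_{G_n}=s_n$. The obstruction to choosing a compatible family, and hence a global section, is exactly the term $\varprojlim{}^{1}$ of the inverse system of character groups $H^1(G_n;\mathbb{T})$ under the restriction maps; indeed the Milnor sequence gives $H^2(G;\mathbb{T})\cong \varprojlim{}^{1} H^1(G_n;\mathbb{T})$ once each $H^2(G_n;\mathbb{T})$ vanishes. These character groups are compact, being closed subgroups of the compact groups of continuous $\mathbb{T}$-valued functions, and $\varprojlim{}^{1}$ of a countable inverse system of compact Hausdorff abelian groups vanishes since the Mittag-Leffler condition is automatic for compact groups. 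Concretely, I would correct the $s_n$ inductively: given $s_n$, adjust $s_{n+1}$ by a character of $G_{n+1}$ restricting on $G_n$ to the discrepancy $s_{n+1}|_{G_n}\cdot s_n^{-1}$, the required correction being available precisely because of this compactness.

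The compatible family then glues to a continuous homomorphic section $s\colon G=\bigcup_n G_n\to\Sigma$, exhibiting $\Sigma\cong G\times\mathbb{T}$. The main obstacle is this gluing step: triviality on each elementary stage is essentially formal, but assembling a global trivialization requires the vanishing of the inverse-limit obstruction coming from the characters, which is where the compactness of $\mathbb{T}$, and hence of the groups $H^1(G_n;\mathbb{T})$, is decisive.
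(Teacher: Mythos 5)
Your overall strategy --- trivialize the twist over each elementary stage $G_n$ of the AF-exhaustion and then splice the homomorphic sections together --- is genuinely different from the paper's proof, which instead realizes $G$ as a transformation groupoid $G^{(0)}\rtimes\mathcal G$ with $\mathcal G$ countable and locally finite (via Giordano--Putnam--Skau) and then quotes the triviality results of Li--Liao--Winter. The first half of your argument is sound: over a compact, totally disconnected groupoid the principal $\mathbb{T}$-bundle $\Sigma|_{G_n}\to G_n$ is trivial as a bundle, so the restricted twist is implemented by a continuous $2$-cocycle, and the explicit coboundary formula for a pair groupoid, applied fiberwise over each clopen base $Z_i$, yields a continuous homomorphic section $s_n$.

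The gap is in the gluing step, precisely where you declare the argument decisive. The groups of continuous characters of $G_n$ are \emph{not} compact: already for $G_n=R_2\times Z$ with $Z$ a Cantor set, a character is determined by its value on the off-diagonal sheet, giving an identification with $C(Z;\mathbb{T})$, which is non-compact in the uniform topology (locally constant $\pm 1$-valued functions subordinate to ever finer clopen partitions stay at mutual distance $2$, so no subsequence converges), while in the topology of pointwise convergence its closure inside $\mathbb{T}^{G_n}$ contains discontinuous homomorphisms, so it is not closed there either. Thus the assertion that Mittag--Leffler is ``automatic by compactness'' has no basis, and your justification for the inductive correction collapses. The step can be repaired, but by a different mechanism: on an elementary groupoid every continuous character is a coboundary of a continuous function on the unit space --- on a tower $R_k\times Z_i$ set $f(a,z):=c\bigl((a,1),z\bigr)$, so that $c\bigl((a,b),z\bigr)=f(a,z)\overline{f(b,z)}$ --- and since $G_n^{(0)}=G^{(0)}$, the coboundary of $f$ is defined on all of $G$. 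Hence the restriction maps $Z^1(G_{n+1};\mathbb{T})\to Z^1(G_n;\mathbb{T})$ are \emph{surjective}, which lets you correct $s_{n+1}$ so that it restricts exactly to $s_n$; with that substitution (and with no appeal to compactness or to a Milnor sequence) your induction closes and the proof goes through.
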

\begin{proof}
    Let us first notice that $G$ is a transformation groupoid. Indeed, any AF-relation is an étale equivalence relation on the Cantor space $G^0$ in the sense of \cite[Definition~2.1]{GTS}.
Therefore, by \cite[Proposition~2.3]{GTS}, there exists a countable group
$\mathcal G$ acting on $G^0$ such that
$G \simeq G^0 \rtimes \mathcal G$ as groupoids.
Since $G$ is an AF-relation, the action of $\mathcal G$ on $G^0$ is necessarily
free and minimal. It then follows from \cite[Theorem~3.8]{GTS} that $\mathcal G$ is locally finite. Consequently, $G$ falls within the framework of \cite[Proposition~7.1]{LiArxiv},
and hence the  twist is always trivial by
\cite[Proposition~6.3]{LiArxiv}.
\end{proof}

With the above ingredients in place, we are now in a position to state the main result of this note.
\begin{theorem}
   The $C^\ast$-diagonals $\mathcal{C}$ and $\mathcal{D}$ are equivalent. 
\end{theorem}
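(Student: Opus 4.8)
The plan is to assemble the pieces already established into a chain of isomorphisms showing that $(\Aa,\Cc)$ and $(\Aa,\Dd)$ are equivalent as $C^\ast$-diagonal inclusions. The strategy rests on the fact that, for $C^\ast$-diagonals whose Weyl groupoids are $AF$-relations, the whole inclusion is recovered from the groupoid-with-twist data, and by Proposition~\ref{prop: twist} the twist contributes nothing. So the equivalence reduces entirely to the groupoid-level statement $G(\Cc)\simeq G(\Dd)$, which Lemma~\ref{Lemma:CDEquiv} essentially delivers once we know $G(\Cc)=G_\Cc$.

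First I would argue that the Weyl groupoid $G(\Cc)$ of the pair $(\Aa,\Cc)$ coincides with the transformation groupoid $G_\Cc=\Omega\rtimes\partial\Gamma$. Proposition~\ref{Prop:AlphaAction} shows $G_\Cc$ is a subgroupoid of $G(\Cc)$; for the reverse inclusion one invokes that $\Cc$ has the unique extension property (Corollary~\ref{coro: unique extension}), so the normalizers are, up to multiplication by elements of $\Cc$, generated by the ribbon operators, whose induced partial homeomorphisms all lie in $\partial\Gamma$ by \eqref{Eq:GammaU}. Since the ribbon operators generate $\Aa$ and every normalizer decomposes through them, no further germs appear, giving $G(\Cc)=G_\Cc$. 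Next, Proposition~\ref{prop: AF} tells us $G_\Cc$ is an $AF$-relation, hence topologically principal, étale, Hausdorff, locally compact and second countable, so Proposition~\ref{Prop:Renault413} applies: the $C^\ast$-inclusion is reconstructed from $(G_\Cc,\Sigma)$, and by Proposition~\ref{prop: twist} the twist $\Sigma$ is trivial, yielding $(\Aa,\Cc)\cong(C^\ast_r(G_\Cc),C_0(\Omega))$ as diagonal inclusions with no twist.

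Then I would run the identical analysis for the standard diagonal $\Dd=C^\ast\{\sigma^z_e\}$: its Weyl groupoid $G(\Dd)$ is likewise an $AF$-relation with Cantor unit space and trivial twist, so $(\Aa,\Dd)\cong(C^\ast_r(G(\Dd)),C_0(G(\Dd)^0))$. By Lemma~\ref{Lemma:CDEquiv}, the Krieger ordered dimension groups agree, $(H_0(G_\Cc),H_0(G_\Cc)^+,[1])\simeq(\ZM[\tfrac12],\ZM_+[\tfrac12],1)\simeq(H_0(G(\Dd)),\dots)$, and Krieger's theorem \cite{Krieger} turns this agreement of complete invariants into an isomorphism of groupoids $G_\Cc\simeq G(\Dd)$. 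Because both twists are trivial, this groupoid isomorphism lifts to an isomorphism of the twisted groupoid $C^\ast$-algebras carrying one diagonal onto the other, and by the functoriality of Renault's reconstruction it is implemented by an automorphism $\Phi$ of $\Aa\cong M_{2^\infty}$ with $\Phi(\Cc)=\Dd$. This is precisely the automorphism equivalence of the two $C^\ast$-diagonals.

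The main obstacle I anticipate is the identification $G(\Cc)=G_\Cc$, i.e. verifying that the ribbon operators exhaust all the germs of the Weyl groupoid rather than merely generating a subgroupoid. This requires care: one must show that every normalizer of $\Cc$ in $\Aa$ induces a partial homeomorphism already present in $\partial\Gamma$, which uses both the unique extension property and the fact that $\Aa$ is densely spanned by $\Cc$-multiples of ribbon operators. The remaining steps — the triviality of the twist and the passage from equal Krieger invariants to equal inclusions — are comparatively formal, as they follow directly from the cited results of Renault, Li, and Krieger combined with the dimension-group computation already carried out in Lemma~\ref{Lemma:CDEquiv}.
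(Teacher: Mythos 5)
Your proposal follows essentially the same route as the paper: identify the Weyl groupoid of the pair $(\Aa,\Cc)$ with the transformation groupoid $G_\Cc$, use Propositions~\ref{prop: AF} and \ref{prop: twist} to dispose of the twist, and then combine Lemma~\ref{Lemma:CDEquiv} (the Krieger dimension-group computation) with Proposition~\ref{Prop:Renault413} and the completeness of the Weyl twist invariant to conclude the equivalence. The only divergence is in presentation: you justify $G(\Cc)=G_\Cc$ directly through the decomposition of normalizers into $\Cc$-multiples of ribbon operators, whereas the paper extracts that identification from Lemma~\ref{Lemma:CDEquiv} together with Proposition~\ref{Prop:Renault413}; both arguments rest on the same fact, namely that the ribbon operators normalize $\Cc$ and generate $\Aa$, so the proofs are substantively the same.
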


\begin{proof}
    Take $(G,\Sigma)$ in \ref{Prop:Renault413} to be $G_\Cc$ with the trivial twist, as every twist over $G_\Cc$ is trivial by Propositions \ref{prop: AF} and \ref{prop: twist}. Then Lemma \ref{Lemma:CDEquiv} and Proposition \ref{Prop:Renault413} assure us that $G_\Cc$ is isomorphic to the Weyl groupoid $G(\mathcal{C})$ of the $C^\ast$-diagonal $\Cc \subset \Aa$ and, in particular, that $G(\mathcal{C})$ has only trivial twists. Thus, the result follows by applying again Lemma \ref{Lemma:CDEquiv} and the fact that the Weyl twists, which reduce to the Weyl groupoids in our case, are complete invariants for $C^\ast$-diagonals \cite{Kum,Ren1}.
\end{proof}

   Finding an automorphism that implements the predicted equivalence appears to be non-trivial. Indeed, we can show how the most obvious isomorphism $\alpha\colon \Cc \to \Dd$ fails to lift to an automorphism of $\Aa$. This isomorphism of commutative $C^\ast$-algebras sends $A_v$ to $\sigma_{e_v}^z$, where $e_v$ is the upper (vertical) edge of the star $v$, and $B_{\tilde v}$ to $\sigma_{e_{\tilde v}}^z$, where $e_{\tilde v}$ is the upper (horizontal) edge of the face $\tilde v$. 
\begin{proposition}\label{Prop:AutoM}
The isomorphism $\alpha\colon \Cc\to\Dd$ does not lift to an
automorphism of $\Aa$.
\end{proposition}

\begin{proof}
Suppose, by contradiction, that $\alpha$ lifts to an automorphism of $\Aa$. Fix a vertex $v$. Then there exists $X:=\alpha^{-1}(\sigma_{e_v}^x)\in\Aa,$ with $X\neq 0,$ such that
$$ A_vXA_v=-X,
    \qquad
    A_{v'}XA_{v'}=X
    \quad\text{for every }v'\neq v .$$
For $n\in\mathbb N^\times$, let $\Lambda_n$ be the set of edges contained in the box $\{-n,\ldots,n\}^{\times 2}+v$, and choose
$X_n\in\Aa_{\Lambda_n}$ with $X_n\to X$ in norm. Let
$\tilde\partial\Lambda_n$ be the dual path surrounding $\Lambda_n$, and set $
    \bar\Lambda_n
    :=
    \Lambda_n\cup
    \{e\in E:\ e\cap \tilde\partial\Lambda_n\neq\emptyset\}.$ Then
\[
    \prod_{A_u\in\Aa_{\bar\Lambda_n}} A_u
    =
    F^x_{\tilde\partial\Lambda_n}.
\]
Indeed, each interior edge appears in exactly two star operators and cancels, while each boundary edge appears only once, yielding $F^x_{\tilde\partial\Lambda_n}$. Since $F^x_{\tilde\partial\Lambda_n}$ is supported outside $\Lambda_n$, it
commutes with $X_n$. Hence
\[
    \Big(\prod_{A_u\in\Aa_{\bar\Lambda_n}} A_u\Big)
    X_n
    \Big(\prod_{A_u\in\Aa_{\bar\Lambda_n}} A_u\Big)
    =
    X_n .
\]
On the other hand, using the commutation relations satisfied by $X$, all
factors $A_u$ with $u\neq v$ commute with $X$, while $A_v$ anticommutes with
$X$. Therefore
\[
    \Big(\prod_{A_u\in\Aa_{\bar\Lambda_n}} A_u\Big)
    X
    \Big(\prod_{A_u\in\Aa_{\bar\Lambda_n}} A_u\Big)
    =
    A_vXA_v
    =
    -X .
\]
Consequently,
\[
    \|X+X_n\|
    =
    \left\|
    \Big(\prod_{A_u\in\Aa_{\bar\Lambda_n}} A_u\Big)
    (X-X_n)
    \Big(\prod_{A_u\in\Aa_{\bar\Lambda_n}} A_u\Big)
    \right\|
    =
    \|X-X_n\|.
\]
Since $X_n\to X$, we obtain $X_n\to -X$ as well. Hence $X=-X$, and therefore
$X=0$, a contradiction. Thus $\alpha$ cannot lift to an automorphism of
$\Aa$.
\end{proof}

% Suppose that $\alpha$ lifts to an automorphism of $\Aa$, and pick an arbitrary vertex $v$. Then there must exist $X = \alpha^{-1}(\sigma_{e_v}^x) \in \Aa$, $X \neq 0$, such that $A_v X A_v = -X$ and $X$ commutes with all other $A_{v'}$. For $n\in \NM^\times$, let $\{-n,\ldots,n\}^{\times 2}+v$ be a box of the square lattice surrounding $v$, and take $\Lambda_n$ to be the set of edges contained in such box. Then there should be a sequence $X_n \in \Aa_{\Lambda_n}$ converging to $X$ in norm. Now, take the dual path $\tilde \partial \Lambda_n$ surrounding $\Lambda_n$ and consider $\bar \Lambda_n = \Lambda_n \cup \{e \in E, \ e \cap \tilde \partial \Lambda_n \neq \emptyset\}$. Note that $\prod_{A_u \in \Aa_{\bar \Lambda_n}} A_u = F_{\tilde \partial \Lambda_n}^x$ and, as such, 
% \[
%     \big (\prod_{A_u \in \Aa_{\bar \Lambda_n}} A_u \big ) X_n \big (\prod_{A_u \in \Aa_{\bar \Lambda_n}} A_u \big ) = X_n.
% \]
% On the other hand, using the commutation relations for $X$, we have 
% \[
%     \big (\prod_{A_u \in \Aa_{\bar \Lambda_n}} A_u \big ) X \big (\prod_{A_u \in \Aa_{\bar \Lambda_n}} A_u \big ) = A_v X A_v = -X.
% \]
% Thus $X+X_n=0$,
% which automatically implies $X=0$ by taking $n \to \infty$. This is a contradiction and, as such, $\alpha$ cannot be lifted to an automorphism of $\Aa$.

\section*{Declarations}
The authors have no conflicting or competing interests to declare that are relevant to the content of this article. No data was produced or used for this work.

\end{document}